 \newcommand{\ra}[1]{\renewcommand{\arraystretch}{#1}}
\definecolor{maroon}{RGB}{250, 0, 150}
\newtheorem{theorem}{Theorem}
\newtheorem{lemma}[theorem]{Lemma}
\newtheorem{proposition}[theorem]{Proposition}
\theoremstyle{definition}
\newtheorem{definition}[theorem]{Definition}
\newtheorem{observation}[theorem]{Observation}
\theoremstyle{remark}
\newtheorem{remark}[theorem]{Remark}
\numberwithin{equation}{section}
\newcommand{\Z}{\mathbb{Z}}
\newcommand{\R}{\mathbb{R}}
\newcommand{\T}{\theta}
\newcommand{\Th}{\theta}
\newcommand{\mb}{\mathbf}
\newcommand{\bdry}{\partial}
\DeclareMathOperator{\Int}{Int}
\definecolor{amaranth}{rgb}{0.9, 0.17, 0.31} 
\definecolor{carrotorange}{rgb}{0.93, 0.57, 0.13} 
\definecolor{citrine}{rgb}{0.89, 0.82, 0.04} 
\definecolor{dartmouthgreen}{rgb}{0.05, 0.5, 0.06} 
\definecolor{ballblue}{rgb}{0.13, 0.67, 0.8} 
\definecolor{ceruleanblue}{rgb}{0.16, 0.32, 0.75} 
\definecolor{amethyst}{rgb}{0.6, 0.4, 0.8} 
\definecolor{amber}{rgb}{1.0, 0.75, 0.0} 
\definecolor{burlywood}{rgb}{0.87, 0.72, 0.53} 
\begin{document}

\title[Unknotting numbers for $\T$-curves]{Unknotting numbers for prime $\T$-curves up to seven crossings}

\author[D. Buck and D. O'Donnol]{Dorothy Buck$^{\dag}$ and Danielle O'Donnol$^{\dag\dag}$\\ Appendix by Kenneth L.\ Baker$^{\ast}$}

\address{Department of Mathematics,
University of Bath,
Claverton Down,
Bath, England
BA2 7AY}
\email{d.buck@bath.ac.uk}

\address{Department of Mathematics, Indiana University Bloomington, 831 E. Third Street, Bloomington, IN 47405}
\email{odonnol@indiana.edu}

\address{Department of Mathematics, University of Miami, 
Coral Gables, FL 33146, USA}
\email{k.baker@math.miami.edu}

\thanks{$^{\dag}$ This work was partially supported by EPSRC grants EP/H0313671, EP/G0395851.    
She also acknowledges very generous funding from The Leverhulme Trust grant RP2013-K-017. 
$^{\dag\dag}$ This work was partially supported by EPSRC grant EP/G0395851, AWM Mentor Travel Grant and the National Science Foundation grant DMS-1406481, DMS-1600365.
$^{\ast}$ This work was partially supported by a grant from the Simons Foundation (\#523883 to Kenneth L.\ Baker).
} 

\date{\today}


\begin{abstract}
Determining unknotting numbers is a large and widely studied problem.  
We consider the more general question of the unknotting number of a spatial graph.
We show the unknotting number of spatial graphs is subadditive. 
Let $g$ be an embedding of a planar graph $G$, then we show $u(g) \geq \max\{u(s) |$ $s$  is a non-overlapping set of constituents of $g\}$.

Focusing on $\theta$-curves, we determine the exact unknotting numbers of the $\theta$-curves in the Litherland-Moriuchi Table.
Additionally, we demonstrate unknotting crossing changes for all of the curves.  
In doing this we introduce new methods for obstructing unknotting number $1$ in $\theta$-curves.  
\end{abstract}

\maketitle

\section{Introduction}
Determining the unknotting number for knots is a notoriously hard problem.  
We are studying the unknotting number of embedded graphs.  
An embedded graph is called a {\bf spatial graph}.  
The graph with two vertices and three edges between them is called the {\bf $\theta$-graph}.  
An embedded $\theta$-graph is called a {\bf $\theta$-curve}.  
In this article we present two new ways to obtain an obstruction to a $\theta$-curve having unknotting number 1.  
We also present some observation about the relationship between unknotting number of a graph and the unknotting number of its cycles, and the behavior under connected sum.  

This article is focused on determining the unknotting number of prime $\theta$-curves, up to seven crossings.  
The {\bf unknotting number} of a knot $K$, $u(K)$, is the minimum number of crossing changes needed to obtain the trivial knot (or unknot), over all possible diagrams of $K$.  
For {\bf planar graphs}, those abstract graphs that can be embedded in the plane, the planar embedding of the graph (up to equivalence) is considered a trivial embedding.  
A trivial embedding is also called {\bf unknotted.} 
For a spatial graph $g$, the {\bf unknotting number}, $u(g)$, is the minimum number of crossing changes needed to obtain a planar embedding, over all possible diagrams of $g$.  
In \cite{Kawauchi}, Kawauchi extends the idea of unknotting number to nonplanar graphs.  
He defined three different unknotting invariants for graphs, which are all equivalent for planar graphs.   
Here we will focus on the $\theta$-graph, which is a planar graph.  

A related question is how to determine if an embedding of a graph is planar.  
In \cite{SimonWolcott}, Simon and Wolcott gave a criterion that detects if a $\theta$-curve is planar.
In \cite{ScharlThomp}, Scharlemann and Thompson gave criterion for detecting if any spatial graph is planar.  

Our interest in this problem stems from our work to understand an important biological process:  DNA replication. 
DNA replication is when a single DNA molecule is reproduced to form two new identical DNA molecules.  
In the course of replication of small circular DNA molecules, such as plasmids or bacteria, the partially replicated DNA forms a $\theta$-curve structure.  
Intriguingly, this $\theta$-curve can be knotted \cite{AdamsCozzarelli, VigueraSchvartzman, SantamariaSchvartzman, LopezSchvartzman}.  
We, and biologists, would like to understand unknotting of $\theta$-curves, in order to better understand both the knotting that occurs during replication, and the processes that drive it.  
We explore the biological ramification of these results with Andrzej Stasiak \cite{usStasiak}.

In Section \ref{sec_background}, we give background on spatial graphs, unknotting number, and define vertex-connected sums.  

In Section \ref{sec_obs}, we show that unknotting number is subadditive under vertex-connected sum.  
A {\bf constituent knot} of a spatial graph $g$, is a subgraph of $g$ that is a cycle.  
We define a new invariant of a spatial graph, the {\bf maximal constituent unknotting number of a graph}, $$mcu(g)=max\{u(s)|\text{ where } s \text{ is a non-overlapping set of constituents of } g\}. $$  
The complete definition is given in Section \ref{sec_obs}.  
We also present bounds on $u(g)$ based on the unknotting numbers of its constituent knots.  
Observation \ref{umax} says, for an embedding $g$ of a planar graph $G$,  $u(g)\geq mcu(g).$  
In the case of the $\theta$-graph the maximal constituent unknotting number reduces to $mcu(\theta)=max\{u(K_i)| \text{ over all constituent knots, }K_i \}.$
While simple this bound ends up being integral to our main result. 

In Section \ref{sec_u(g)}, we prove Theorem~\ref{thm:allbut4} which together with Theorem~\ref{thm:kenmain} from the Appendix gives our main theorem: 

\begin{theorem} 
The unknotting numbers for the $\theta$-curves in the Litherland-Moriuchi Table are determined exactly as listed in Table~\ref{table-big}. 
Additionally, unknotting crossing changes are shown for all of the curves in Figure \ref{thetas}.  
\end{theorem}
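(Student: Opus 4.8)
The plan is to establish the theorem in two directions for each of the finitely many $\theta$-curves in the Litherland--Moriuchi Table. First I would fix a convenient diagram for each prime $\theta$-curve up to seven crossings, taken directly from the table, and for each one exhibit an explicit sequence of crossing changes (drawn in Figure~\ref{thetas}) that converts it to the standard planar $\theta$-graph; the length of the shortest such sequence I find gives an \emph{upper bound} $u(g) \le n$. Since the table is finite, this is in principle a bounded search, but in practice one wants to be clever: a single well-chosen crossing change often reduces a seven-crossing curve to one already known (e.g.\ a five- or four-crossing curve, or one whose unknotting number has been pinned down), so I would organize the upper bounds so they cascade, reusing subadditivity under vertex-connected sum (from Section~\ref{sec_obs}) wherever a curve in the table decomposes. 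The outcome of this half is that every entry of Table~\ref{table-big} is an upper bound and the crossing changes of Figure~\ref{thetas} are verified.

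The harder half is the matching \emph{lower bounds}, and this is where the bulk of the work and the main obstacle lie. For curves with tabulated unknotting number $0$ there is nothing to prove (planarity is detected, e.g.\ via Simon--Wolcott \cite{SimonWolcott}). For curves with unknotting number $\ge 2$ in the table, I would invoke Observation~\ref{umax}: since $u(g) \ge mcu(g) = \max_i u(K_i)$ over constituent knots $K_i$, it suffices to identify a constituent knot of $g$ whose classical unknotting number is at least $2$ (and unknotting numbers of knots through, say, nine or ten crossings are completely known from the literature and KnotInfo). So the real content is ruling out $u(g)=1$ for those $\theta$-curves where $mcu(g)=1$ but the table asserts $u(g)\ge 2$ --- here the constituent-knot bound is too weak, and this is precisely the gap the paper's new obstructions are designed to fill. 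I would apply the two new obstructions to unknotting number $1$ introduced in the paper (presumably built from branched covers, Alexander-type invariants, or the structure of $\theta$-curve double branched covers / Goeritz-type forms) to each such curve, checking in each case that no single crossing change in any diagram can planarize it.

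I expect the main obstacle to be exactly that last step: assembling a uniform obstruction that dispatches \emph{all} the hard unknotting-number-one cases at once, rather than ad hoc arguments per curve. A crossing change on a $\theta$-curve is not a local move on a single constituent knot in an obviously controlled way (it may lie on an edge shared by two cycles), so the obstruction must be genuinely a $\theta$-curve invariant that behaves controllably under a crossing change --- e.g.\ one must show that an unknotting crossing change forces the relevant invariant (a linking form, a signature, a Casson--Gordon-style or Heegaard Floer $d$-invariant of the associated double branched cover of $S^3$ over the $\theta$-curve, or a nullity/determinant condition) to take one of a short list of ``unknot-compatible'' values, and then compute that the curve in question violates this. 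The four curves deferred to Theorem~\ref{thm:kenmain} in the Appendix are presumably the residual cases where even the new obstructions of Section~\ref{sec_u(g)} do not suffice and a separate, more delicate argument (likely a Heegaard Floer or covering-space computation by Baker) is required; combining Theorem~\ref{thm:allbut4} with Theorem~\ref{thm:kenmain} then yields the full statement, and the crossing-change diagrams of Figure~\ref{thetas} certify that every lower bound is attained, completing the proof.
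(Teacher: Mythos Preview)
Your plan matches the paper's structure: upper bounds via the explicit crossing changes in Figure~\ref{thetas}, lower bounds via $mcu(\theta)$ (Observation~\ref{umax}) for most curves, the paper's special obstructions for the handful of curves with $mcu=1$ but $u=2$, and Theorem~\ref{thm:kenmain} for the four residual cases. Two small corrections: subadditivity under vertex-connected sum is irrelevant here since every tabulated curve is prime, and the two concrete obstructions the paper actually uses are (i) for six of the hard curves, the double branched cover of $S^3$ over the unknotted constituent lifts the remaining edge $e$ to a knot $\tilde{e}$ with $u(\tilde{e})\geq 3$, forcing $u(\theta)\geq 2$ (Lemma~\ref{lem_many}), and (ii) for $\mb{7_8}$, the Coward--Lackenby uniqueness of the unknotting crossing circle for each trefoil constituent, combined with their opposite surgery signs, rules out a common single crossing change (Lemma~\ref{lem_78}).
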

\noindent All of the prime $\theta$-curves up to seven crossing have unknotting number 1, 2, or 3.   
In many cases we were able to find a set of unknotting crossing changes equal in number to the lower bound give by our Observation \ref{umax}.  
However, for some of the $\theta$-curves finding the unknotting number was more delicate.  
In the proofs of Lemma \ref{lem_many} and Lemma \ref{lem_78} we show two ways to find an obstruction to the $\theta$-curve having unknotting number 1.  
Both of these methods could easily be adapted to find an obstruction to a $\theta$-curve having unknotting number greater than 1. 
In the Appendix by Kenneth Baker, Lemmas~\ref{lem:unlinking} and \ref{lem:twist} give another way to obstruct a $\theta$--curve from having unknotting number $1$.  With this insightful addition the  four remaining $\theta$-curves $\mb{7_5}, \mb{ 7_{22}},$ $\mb{7_{24}},$ and $\mb{7_{58}}$ are shown to have unknotting number $2$ in Theorem~\ref{thm:kenmain}.  
In this work we did not have any examples where we needed to find a more subtle obstruction to having unknotting number greater than 1. \\

\noindent{\bf Acknowledgements.} The authors would like to thank Chuck Livingston, Kent Orr, Bernardo Schvartzman, and Jesse Johnson for interesting and useful conversations.  
We also thank the Isaac Newton Institute of Mathematical Sciences for hosting us during our early conversations. 

\section{Background}\label{sec_background}

A {\bf spatial graph} is an embedding of a graph in $\R^3$ (or $S^3$).
Spatial graphs are studied up to ambient isotopy, and can be thought of as a generalization of a knot.  
Similar to knots there is a set of Reidemeister moves for spatial graphs \cite{Kauffman}.  
See Figure \ref{Rmoves}.  
A cycle in a spatial graph is thought of as a knot.  
If $K$ appears as one of the cycles of a spatial graph $g$, then $K$ is said to be in $g$, and $K$ is called a {\bf constituent knot} of $g$.  
A {\bf constituent knot} of $g$, is a subgraph of $g$ that is a cycle.  

\begin{figure}[htpb!]
\begin{center}
\includegraphics{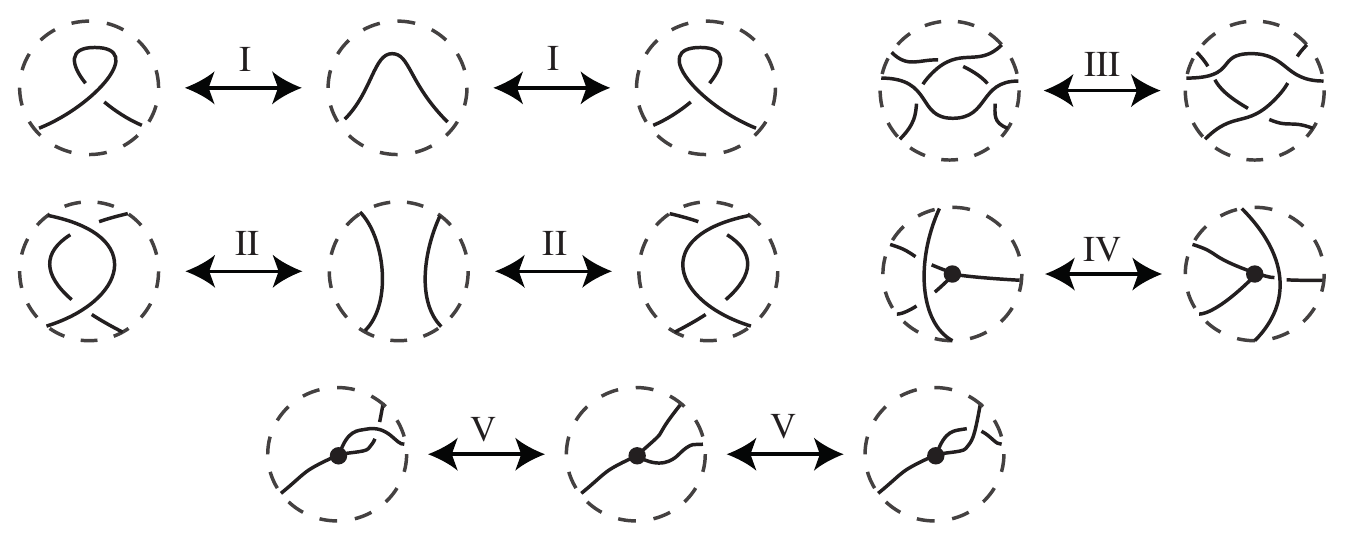}
\caption{\small The Extended Reidemeister Moves.  Moves I, II, and III are the same as for knots.  Move IV is where an arc moves passed a vertex either over or behind (not shown).  
Move V is where the edges switch places next to the vertex.  }
\label{Rmoves}
\end{center}
\end{figure}

A {\bf crossing move} is when a strand of $K$ is passed through another strand, or in a graph an edge is passed through itself or another edge.  
This can changed the knot type or graph type.  
It can be useful to describe a crossing move in terms of surgery.  
A {\bf crossing disk} is an embedded disk which intersects the knot or graph in its interior twice, but has zero algebraic intersection.  
A {\bf crossing circle} is the boundary of a crossing disk.  
A crossing change can be described by $(\pm 1)$-Dehn surgery along a crossing circle.   
For a definition of surgery see Rolfsen \cite{Rolf}.  
 Crossing changes defined by two different crossing circles are {\bf equivalent} if the surgery coefficients are the same and there is an ambient isotopy, keeping $K$ fixed throughout, that takes one crossing circle to the other.  
In a diagram a crossing move can be made at a crossing of the diagram, in this case, the arcs at that crossing are replaced with arcs where the opposite edge is on top, usually called a {\bf crossing change}.  
A crossing move can also be indicated by an oriented framed arc, called a {\bf crossing arc}, the crossing arc goes between the two points that will be passed through each other, and traces the path they will take to do so.  
For a framed arc there are two different possible crossing changes that it could be indicating, so to be well defined it must also be given an orientation.  

When a crossing arc or a set of crossing arcs results in the unknot (or a trivial embedding) they are called {\bf unknotting arcs}.    
The {\bf unknotting number} of a knot $K$, $u(K)$, is the minimum number of crossing changes needed to obtain the trivial knot, over all possible diagrams of $K$.  
Abstract graphs that can be embedded in the plane are called {\bf planar graphs}.  
Throughout this article we will be considering only planar graphs.  
For planar graphs, the planar embedding of the graph (up to equivalence) is considered trivial embedding.  
This is a natural choice, since the unknot (the trivial knot) is the only knot which is equivalent to a planar embedding of the circle.  
A trivial embedding is also called {\bf unknotted.} 
For a spatial graph $g$, the {\bf unknotting number}, $u(g)$, is the minimum number of crossing changes need to obtain a planar embedding, over all possible diagrams of $g$.

We will call an embedded $\theta$-graph, a {\bf $\T$-curve}.  
We will denote the trivial $\T$-curve as $0_{\theta}$.  
For all other $\theta$-curves with seven or less crossings we will use the names given in the Litherland-Moriuchi Table \cite{Moriuchi} (Figure \ref{thetas}).  
For knots we will use the names given in Rolfsen's tables \cite{Rolf}.  
Though it should be clear from context to further distinguish these the $\theta$-curve name will appear in bold.  
In order to define a prime $\theta$-curve we will first define the order$-n$ vertex connect sum.  
Given two graphs embedded in $S^3$, call them $g_1$ and $g_2,$ both with a chosen valence $n$ vertex, the {\bf  order$-n$ vertex connect sum} of $g_1$ and $g_2,$ denoted $g_1\#_ng_2,$ is the result of removing a $3$-ball neighbourhood of each of the vertices and glueing the remaining $3$-balls together so that the $n$ points of $g_1$ on the boundary are 
matched with the $n$ points of $g_2$ on the boundary of the other ball.   
This definition is restricted to $\R^3$ in the natural way.  
For examples see Figure \ref{Order3sum}.  
The order-2 vertex connect sum is the usual connect sum.  
We will allow this with or without valence 2 vertices.  
In general, the order$-n$ vertex connect sum has ambiguity, because when the $S^2$ boundaries are glued together different braids can be introduced between the original graphs.  
For more information about this see \cite{Wolc}. 
A {\bf prime} $\theta$-curve is one that is neither an order-3 vertex sum of two nontrivial $\theta$-curves, nor a connect sum of a nontrivial knot with the edge of a (possibly trivial) $\theta$-curve.  

\begin{figure}[htpb!]
\begin{center}
\begin{picture}(350, 140)
\put(0,0){\includegraphics{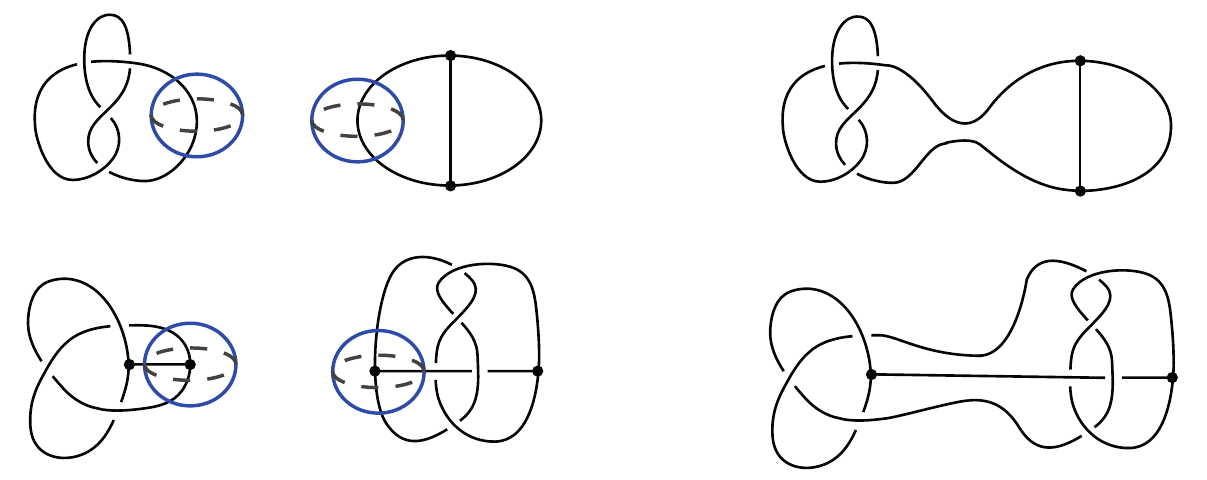}}
\put(187, 100){\Large $=$}
\put(75, 100){\Large $\#$}
\put(27, 73){${4_1}$}
\put(125, 73){$\mb{0_{\theta}}$}
\put(260, 73){${4_1}\#_2\mb{0_{\theta}}$}
\put(187, 33){\Large $=$}
\put(75, 33){\Large $\#$}
\put(33, -5){$\mb{3_1}$}
\put(125, -5){$\mb{5_1}$}
\put(260, -5){$\mb{3_1}\#_3\mb{5_1}$}
\end{picture}
\caption{\small An order-2 vertex connect sum of ${4_1}$ and $\mb{0_{\theta}}$ (top).  An order-3 vertex connect sum of $\mb{3_1}$ and $\mb{5_1}$ (bottom).   }
\label{Order3sum}
\end{center}
\end{figure}

\section{Observations}\label{sec_obs}

In this section, we present two bounds on the unknotting number of a spatial graph.  Additionally, we define the maximal constituent unknotting number of g.  

\begin{observation} The unknotting number of spatial graphs is subadditive.  Let $g_1$ and $g_2$ be embedded graphs and let $g_1\#_n g_2$ be the order$-n$ vertex connect sum of them.  
Then $$u(g_1\#_n g_2)\leq u(g_1)+u(g_2).$$
\end{observation}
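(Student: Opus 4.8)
The plan is to realize an optimal pair of unknotting sequences for $g_1$ and $g_2$ separately and then splice them together inside $g_1 \#_n g_2$, arguing that neither sequence interferes with the other. First I would fix diagrams $D_1$ of $g_1$ and $D_2$ of $g_2$ that achieve the respective unknotting numbers: there is a set of $u(g_1)$ crossing changes in $D_1$ turning $g_1$ into a planar embedding, and likewise $u(g_2)$ crossing changes in $D_2$. The key structural observation is that the order-$n$ vertex connect sum is formed by excising a $3$-ball neighbourhood of the chosen valence-$n$ vertex in each $g_i$ and gluing along the $S^2$ boundaries; so one can arrange (after an isotopy of each $D_i$ that pushes all of its complexity away from the chosen vertex) that $g_1 \#_n g_2$ has a diagram $D$ which looks like $D_1$ in one region of the plane and $D_2$ in a disjoint region, joined by $n$ parallel arcs through a trivial band. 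Crucially, this joining can be done so that $D$ contains no crossings beyond those already in $D_1$ and $D_2$.

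Next I would perform, inside $D$, the $u(g_1)$ prescribed crossing changes coming from $D_1$ together with the $u(g_2)$ prescribed crossing changes coming from $D_2$ — a total of $u(g_1) + u(g_2)$ crossing changes. Since the two collections of crossings lie in disjoint regions of the diagram and the connecting band region is crossing-free, performing all of them simultaneously is well defined and the result is a diagram that looks like (planar $g_1$) connect-summed with (planar $g_2$) along the valence-$n$ vertices through the trivial band. The remaining step is to check that this result is itself a planar embedding of the underlying abstract graph $G_1 \#_n G_2$: a vertex connect sum of two planar (i.e. trivial) embeddings of planar graphs along a common valence-$n$ vertex, via the untwisted band, is again planar, since one can take a planar disk realizing each trivial embedding and glue the disks along a boundary arc through the vertices. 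This exhibits an unknotting sequence of length $u(g_1) + u(g_2)$ for $g_1 \#_n g_2$, so $u(g_1 \#_n g_2) \le u(g_1) + u(g_2)$.

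The main obstacle I anticipate is the bookkeeping around the ambiguity of the vertex connect sum: as the excerpt notes, gluing the $S^2$ boundaries can introduce a braid between the $n$ strands, so "$g_1 \#_n g_2$" is not a single well-defined spatial graph but a family. The clean statement should be read as: for \emph{every} realization of the order-$n$ vertex connect sum, the inequality holds — and indeed the argument above is insensitive to the gluing braid, because any such braid can be absorbed into the connecting region without adding crossings to the $D_i$ pieces (alternatively, the braid itself can be undone by crossing changes, but we do not even need that, since a braided band connect sum of two trivial embeddings need not be trivial — so some care is warranted here, and the right move is to incorporate the braid into one of the $D_i$'s, say by defining $g_1$ to include it). I would also want to state explicitly that we work with diagrams on $S^2$ to avoid artificial crossings "at infinity." Everything else is routine once the disjoint-support picture is set up.
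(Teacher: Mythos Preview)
Your argument is correct and follows essentially the same idea as the paper's: localize an optimal set of unknotting moves for each summand away from the connect-sum region and carry both sets into $g_1\#_n g_2$. The only difference is that the paper works with unknotting \emph{arcs} in $3$-space (isotoping the removed ball off the arcs) rather than diagram crossings, and --- like your proposal --- it does not engage with the braiding ambiguity you flag.
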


\noindent The minimum crossing changes needed to unknot both $g_1$ and $g_2$ can be described by sets of unknotting arcs ${\alpha_i}$ and ${\beta_i}$, respectively.  
The order$-n$ connect sum can be done without disturbing the unknotting arcs, because the ball can be isotoped to not meet the unknotting arcs.  
Then the set ${\alpha_i}\cup{\beta_i}$ will be a set of unknotting arcs for $g_1\#_n g_2$.  
Thus $u(g_1\#_n g_2)$ is at most the sum of $u(g_1)$ and $u(g_2).$\\

When possible we will use known unknotting numbers of knots to determine bounds on the unknotting numbers for spatial graphs.  
Two constituents are said to {\bf overlap} if they share one or more edges.  
\begin{definition}\label{def:mcu} 
Let $g$ be an embedding of a planar graph $G$.  
Let $s=\{K_1,\dots,K_n\}$ be a set of mutually non-overlapping constituents of $G$.
The unknotting number of $s$ is, $$u(s)=\sum_{K_i\in s}u(K_i).$$
The {\bf maximal constituent unknotting number of $g$} is
$$mcu(g)=\max\{u(s)|s \text{ is a non-overlapping set of constituents of } g\}.  $$
\end{definition}
\noindent The maximal constituent unknotting number is an invariant of the spatial graph.  

There is a relationship between the unknotting number of g and the maximal constituent unknotting number of g.  
\begin{observation} \label{umax}
Let $g$ be an embedding of a planar graph $G$, then $u(g)\geq mcu(g)$.  
\end{observation}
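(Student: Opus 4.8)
The plan is to show that any sequence of crossing changes realizing $u(g)$ can be ``restricted'' to each constituent knot $K_i$ in a non-overlapping set $s$, and that because the $K_i$ share no edges, these restrictions use disjoint collections of crossing changes whose sizes sum to at most $u(g)$. Concretely, I would start from a diagram $D$ of $g$ together with a set of $u(g)$ unknotting arcs (crossing moves) $\gamma_1,\dots,\gamma_{u(g)}$ transforming $g$ into a planar embedding of $G$. Each crossing arc $\gamma_j$ passes one edge (or part of an edge) of $g$ through another; say it involves the pair of edges $\{e_j, e_j'\}$ of $G$ (possibly $e_j = e_j'$ for a self-crossing).

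The key step is the following bookkeeping. Fix a non-overlapping set $s = \{K_1,\dots,K_n\}$ with $mcu(g) = \sum u(K_i)$. For each $i$, let $S_i = \{\, j : e_j \subseteq K_i \text{ and } e_j' \subseteq K_i \,\}$ be the set of indices of crossing arcs both of whose edges lie on the cycle $K_i$; equivalently, $\gamma_j$ for $j\in S_i$ is an honest crossing move on the knot $K_i$ sitting inside $g$. Performing exactly the moves $\{\gamma_j : j \in S_i\}$ — and ignoring all the others — carries the knot $K_i$ to the knot that $K_i$ becomes inside the final planar embedding of $G$; but a constituent of a planar embedding of a planar graph is a planar (hence trivial) embedding of a circle, i.e.\ the unknot. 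Therefore $\{\gamma_j : j\in S_i\}$ is a set of $|S_i|$ crossing changes unknotting $K_i$, so $u(K_i) \le |S_i|$.

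The final step is disjointness of the $S_i$. If $j \in S_i \cap S_{i'}$ with $i \ne i'$, then both edges $e_j, e_j'$ lie on $K_i$ and on $K_{i'}$; in particular the edge $e_j$ is common to $K_i$ and $K_{i'}$, contradicting that $s$ is a set of mutually non-overlapping (edge-disjoint) constituents. Hence the $S_i$ are pairwise disjoint subsets of $\{1,\dots,u(g)\}$, so
\[
mcu(g) = \sum_{i=1}^n u(K_i) \;\le\; \sum_{i=1}^n |S_i| \;=\; \Big|\bigcup_{i=1}^n S_i\Big| \;\le\; u(g).
\]
Since this holds for every non-overlapping set $s$, and $mcu(g)$ is the maximum over such $s$, we get $u(g) \ge mcu(g)$.

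The main obstacle — the one point that deserves care rather than a one-line dismissal — is justifying that performing only the subcollection $\{\gamma_j : j\in S_i\}$ of crossing moves on $K_i$, in the ambient $g$, produces exactly the same knot as the image of $K_i$ under the full unknotting sequence applied to $g$. This is where I would be explicit: each crossing arc not in $S_i$ has at least one endpoint-edge off $K_i$, so it can be realized by a $(\pm 1)$-surgery on a crossing circle disjoint from $K_i$, leaving $K_i$'s isotopy class unaffected; and the crossing arcs in $S_i$ act on $K_i$ independently of the others since crossing moves commute as surgeries on disjoint crossing circles. I would phrase this either via the crossing-circle/surgery description already set up in Section~\ref{sec_background}, or, more elementarily, by simply observing that a crossing change is a local move supported in a small ball, and a move whose ball is disjoint from $K_i$ (which can be arranged whenever one of its two strands lies off $K_i$) does not change $K_i$. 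Once that local-support observation is in place, the rest is the elementary counting above.
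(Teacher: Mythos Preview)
Your argument is correct and follows essentially the same approach as the paper's: both restrict an unknotting sequence for $g$ to each constituent in a non-overlapping set, observe that crossing changes involving an edge outside $K_i$ do not affect $K_i$, and use edge-disjointness to conclude the relevant subcollections are disjoint. Your version is simply a more careful formalization---with explicit sets $S_i$ and the crossing-circle justification for why moves off $K_i$ leave it unchanged---of the paper's brief informal paragraph.
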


\noindent When the graph $g$ is unknotted each of its subgraphs are also unknotted.  
The constituent knots are special cases of subgraphs.  
If one of the constituents is not unknotted then the graph will not be unknotted.  
Since we are considering non-overlapping constituents, unknotting one will not affect the others.  
Thus there must be at least enough crossing changes to unknot the set of non-overlapping constituent knots with the largest sum of unknotting numbers. \\

Turning our attention to the $\theta$-graph, all of the constituents of a $\theta$-curve overlap.  
So the maximal constituent unknotting number reduces to $$mcu(\theta)=\max\{u(K_i)| K_i \text{ are the full set of constituent knots}\}.$$ 
If we instead consider the sum of the unknotting numbers of all of the constituents of a $\theta$-curve we see that this can have any relationship with $u(\theta).$  
Let $K_i$ be the constituents of $\theta$ for $i=1, 2, 3$.  
We have $$u(\theta)>u(K_1)+u(K_2)+u(K_3)$$ for $\theta=\mb{5_1}$ and $\mb{6_1}$, however 
$$u(\theta)=u(K_1)+u(K_2)+u(K_3)$$ for $\theta=\mb{3_1}$ and $\mb{6_{12}}$, and finally
$$u(\theta)<u(K_1)+u(K_2)+u(K_3)$$ for $\theta=\mb{5_7}$ and $\mb{6_{16}}.$
There are many more examples in each of these instances.  
However, the only prime $\theta$-curves with 7 or less crossing, for which $u(\theta)>u(K_1)+u(K_2)+u(K_3)$ holds are those where all of the constituents are unknots.  
It would be interesting to find a prime $\theta$-curve for which $u(\theta)>u(K_1)+u(K_2)+u(K_3)$ holds that has nontrivial constituents.

\section{The unknotting number of prime $\theta$-curves up to 7 crossings}\label{sec_u(g)}

In \cite{Moriuchi}, Moriuchi presented a table of all prime up to 7 crossings, originally compiled by Litherland.  
Moriuchi showed that all of the 90 $\theta$-curves appearing in the table are distinct.  
While the completeness of Litherland's table has not been proven, the independent construction by Moriuchi of the prime $\theta$-curves up to 7 crossings using a variation of Conway's methods gave the same set of $\theta$-curves.  
See Figure \ref{thetas}.  

We have determined $u(g)$ for all but four of the $\theta$-curves in the Litherland-Moriuchi Table.  
In Figure \ref{thetas}, we indicate a set of crossing changes that will unknot each $\theta$-curve.  
We first demonstrate unknotting curves, then show they are minimal.  

\begin{proposition} The crossing changes indicated in Figure \ref{thetas} unknot each of the curves.  
\end{proposition}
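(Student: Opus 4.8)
The plan is to verify the proposition by direct inspection, curve by curve. For each of the 90 prime $\theta$-curves in the Litherland-Moriuchi Table, Figure \ref{thetas} displays a diagram together with a marked set of crossing changes (indicated by crossing arcs with orientations, in the sense defined in Section \ref{sec_background}). The claim is simply that performing the indicated crossing moves on the indicated diagram yields the trivial $\theta$-curve $\mb{0_\theta}$. Thus the first and only real task is: for each curve, perform the indicated crossing changes and then exhibit a sequence of Extended Reidemeister moves (Figure \ref{Rmoves}) reducing the resulting diagram to a standard planar embedding of the $\theta$-graph.

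In practice I would organize the verification as follows. First, set up a convenient planar-isotopy normal form for the trivial $\theta$-curve so that the target of every reduction is unambiguous. Second, group the curves by the number of indicated crossing changes (one, two, or three, matching the $mcu$ lower bounds developed in Observation \ref{umax}) and by structural similarity, so that families of curves admitting essentially the same reduction can be handled together. Third, for each curve, apply the crossing changes and then record an explicit Reidemeister sequence; in the overwhelming majority of cases the post-change diagram already simplifies by Moves I--III (as for knots), with Moves IV and V needed only when strands must be slid past a vertex. A useful sanity check at each stage is to track the constituent knots: after the crossing changes all three constituent knots must become unknots, and the Simon--Wolcott planarity criterion \cite{SimonWolcott} can be invoked as a final certificate that the reduced $\theta$-curve is genuinely planar rather than merely having unknotted constituents.

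The main obstacle is not conceptual but combinatorial bookkeeping: there are 90 curves, and producing a human-checkable Reidemeister reduction for each is lengthy and error-prone. The danger is an off-by-one in reading a crossing sign from the figure, or an isotopy that looks valid in a hand-drawn diagram but secretly changes a constituent's knot type. I would mitigate this by cross-checking two independent invariants after the crossing changes --- for instance that each constituent knot becomes trivial and that the Yamada or Litherland--Moriuchi polynomial of the resulting $\theta$-curve agrees with that of $\mb{0_\theta}$ --- before committing to an explicit move sequence. Since the proposition asserts only the existence of unknotting crossing changes (minimality is deferred to the later lemmas and to Theorem \ref{thm:kenmain}), once every curve has been individually reduced to $\mb{0_\theta}$ the proof is complete; no global argument is required beyond assembling these 90 verifications.
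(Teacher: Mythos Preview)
Your proposal is correct and matches the paper's approach: the paper's own proof is simply ``one can check that all of the $\theta$-curves are unknotted with the indicated changes,'' illustrated by a single worked example ($\mb{5_7}$) reduced via Extended Reidemeister moves. Your version is considerably more elaborate in its bookkeeping and cross-checks, but the underlying strategy---perform the indicated crossing changes and reduce each diagram to $\mb{0_\theta}$---is identical.
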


\begin{proof}
The unknotting crossing changes are highlighted in gray (purple).  
One can check that all of the $\theta$-curves are unknotted with the indicated changes.  
For example, Figure \ref{fig-unknotting} shows how $\mb{5_7}$ can be moved to a planar embedding via Reidemeister moves after the indicated crossing change.  
\end{proof}

\begin{figure}[htpb!]
\begin{center}
\begin{picture}(400, 80)
\put(0,20)
{\includegraphics[width=5.5in]{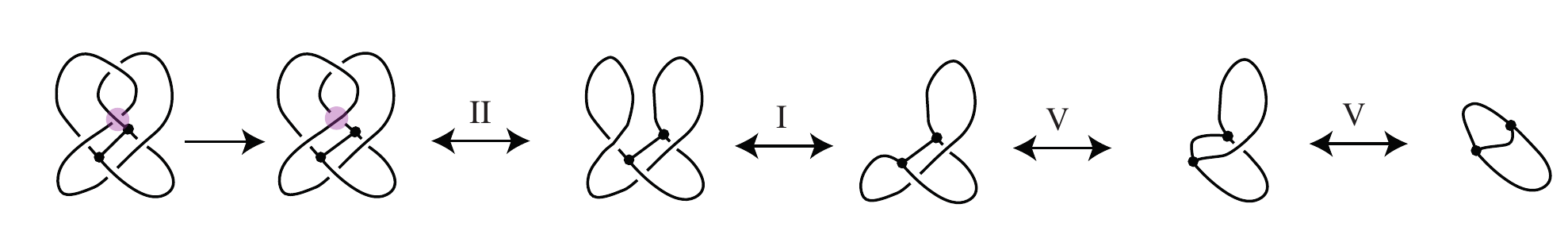}}
\put(20, 13){\small $\mb{5_7}$}
\put(380, 13){\small $\theta_o$}
\end{picture}
\caption{\small The $\theta$-curve $\mb{5_7}$ with unknotting crossing change.  The arrow indicates the highlighted crossing change, and the double arrows are the indicated Reidemeister moves.   }
\label{fig-unknotting}
\end{center}
\end{figure}

\begin{remark}
In Figure \ref{thetas}, one set of unknotting crossings for each $\T$-curve is presented, but it may not be unique.  
In Figure \ref{diffXchange}, there are two $\T$-curves, each with two sets of different unknotting crossing changes indicated.  
In all cases one can check that these crossing changes will unknot the curve.  
For the curve $\mb{4_1}$ the crossing change in yellow and the one in blue can be distinguished by the surgery coefficients of their crossing circles.  
So these crossing changes cannot be equivalent.  
In a similar way, with $\mb{7_{23}}$ the two yellow crossing changes can be distinguished from the crossing change in blue by the surgery coefficients.  
Thus these must be different sets of crossing changes.  
\end{remark}

\begin{figure}[htpb!]
\begin{center}
\begin{picture}(350, 120)
\put(50,10){\includegraphics[width=3in]{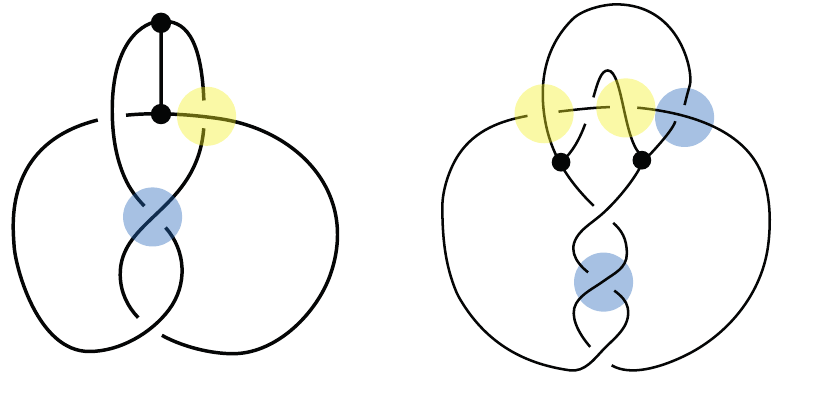}}
\put(84, 0){\large $\mb{4_1}$}
\put(200, 0){\large $\mb{7_{23}}$}
\end{picture}
\caption{\small The $\theta$-curves $\mb{4_1}$ and $\mb{7_{23}}$ each with two different sets of unknotting arcs indicated.  }
\label{diffXchange}
\end{center}
\end{figure}

Now we turn our attention to showing that these are the minimal number of crossing changes needed to unknot the $\theta$-curves.  
First we will take a look at the more subtle cases.  
For each of the curves $\mb{6_{12}, 7_5, 7_8, 7_{10}, 7_{22}, 7_{23}, 7_{24}, 7_{55},  7_{57}, 7_{58}},$ and $\mb{7_{64}}$, we have found a set of two crossing changes that unknot them, and each of them has $mcu(\theta)=1$.

For the first set of curves we will work with the double branched cover branched over the trivial constituent to obtain our result.  

\begin{lemma}\label{lem_many}
The curves $\mb{6_{12}, 7_{10}, 7_{23}, 7_{55},  7_{57}},$ and $\mb{7_{64}}$ have $u(\theta)=2$.  
\end{lemma}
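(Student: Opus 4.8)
The plan is to combine Observation~\ref{umax}, which already supplies the lower bound $u(\theta)\geq mcu(\theta)$, with the fact that for each of these six curves we have exhibited two unknotting crossing changes, so it suffices to rule out $u(\theta)=1$. Since each of these curves has $mcu(\theta)=1$, the constituent-knot obstruction alone is not enough, so I would pass to the double branched cover $\Sigma_2(\theta)$ of $S^3$ branched over one of the trivial constituent knots of $\theta$. Recall that a $\theta$-curve together with a chosen constituent knot $K$ gives, in $\Sigma_2(K)=S^3$, the lift of the remaining arc of $\theta$: this lift is a knot (or link), a classical invariant of the pair $(\theta,K)$. The key observation is that a single crossing change on $\theta$ that does not involve the constituent $K$ lifts to a pair of crossing changes on this lifted knot related by the covering involution, while a crossing change that does involve $K$ changes the branch locus. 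In either case one gets a controlled statement about how the lifted knot, or its associated double branched cover / Alexander-type invariants, can change under a single crossing change.

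The key steps, in order, are: (1) For each curve $\theta\in\{\mb{6_{12}},\mb{7_{10}},\mb{7_{23}},\mb{7_{55}},\mb{7_{57}},\mb{7_{64}}\}$, identify a constituent knot $K$ of $\theta$ that is unknotted, and compute the knot $L_\theta$ in $S^3=\Sigma_2(K)$ obtained by lifting the complementary arc; equivalently, compute an invariant of $L_\theta$ such as its Alexander polynomial, determinant, or the order of $H_1$ of \emph{its} double branched cover. (2) Suppose for contradiction that $u(\theta)=1$, realized by a single crossing change $c$. Split into cases according to whether the crossing arc for $c$ is disjoint from $K$ or meets $K$. (3) In the disjoint case, $c$ lifts to a crossing change (or a pair of symmetric crossing changes) on $L_\theta$ taking it to the lift of the trivial $\theta$-curve's arc, namely the unknot; deduce that $L_\theta$ has unknotting number at most $1$ (or at most $2$ in a symmetric pair), and then invoke a standard unknotting-number-$1$ obstruction — e.g. the Montesinos trick: if $L_\theta$ has unknotting number $1$ then $\Sigma_2(L_\theta)$ is $\pm\det(L_\theta)$-surgery on a knot in $S^3$, which constrains $H_1(\Sigma_2(L_\theta))$ and its linking form. (4) In the case where $c$ meets $K$, observe that this forces the trivial constituent $K$ itself to be changed; track how few crossing changes on $\theta$ can simultaneously trivialize all three constituents, and derive a contradiction from the fact that the other two constituents would then require their own changes, or directly from the branched-cover computation with a different choice of trivial constituent. (5) Conclude $u(\theta)\geq 2$, hence $u(\theta)=2$.

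The main obstacle I anticipate is step~(1) together with the bookkeeping in step~(2): correctly computing the lifted knot $L_\theta$ for each of the six curves from the diagrams in Figure~\ref{thetas}, and verifying that its chosen invariant really does obstruct unknotting number $1$ via the Montesinos/linking-form argument — this is where the individual curves differ and where one must check that the obstruction is not vacuous. A secondary subtlety is making the lifting-of-a-crossing-change argument precise: one must be careful that the crossing disk for $c$ can be isotoped to have controlled intersection with a Seifert surface or with $K$, so that its preimage in the double branched cover is genuinely a crossing disk (or pair thereof) for $L_\theta$, and that the ``algebraically zero intersection'' condition is preserved upstairs. I would handle this with a lemma, stated once and applied to all six curves, that a crossing change on a $\theta$-curve disjoint from a constituent $K$ induces a crossing change on the lift in $\Sigma_2(K)$; the per-curve work then reduces to the invariant computations, which I would organize in a short table rather than writing out longhand.
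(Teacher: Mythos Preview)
Your overall strategy---take the double branched cover over a trivial constituent and lift a putative single unknotting crossing change---is exactly the paper's approach. However, there is a genuine gap in your obstruction step, and you are also making the argument harder than it needs to be.

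First, the simplification you are missing. Each of these six $\theta$--curves has exactly one trivial constituent $K=e_1\cup e_2$ and two nontrivial constituents $e\cup e_1$ and $e\cup e_2$, where $e$ is the edge common to both nontrivial constituents. A crossing disk for $\theta$ meets $\theta$ in two points; restricted to any constituent it meets that knot in $0$, $1$, or $2$ points, and only in the last case does it change the knot type. For a single crossing change to unknot both nontrivial constituents, the disk must meet each of $e\cup e_1$ and $e\cup e_2$ in two points---but there are only two intersection points total, so both must lie on $e$. Hence the crossing is automatically disjoint from $K$, and your case~(4) never arises. This is the paper's key observation and it removes all the bookkeeping you anticipate.

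Second, and more seriously: with the crossing disk on $e$ alone, its preimage in the branched double cover is a pair of crossing disks for the lifted knot $\tilde e$, so a single crossing change downstairs lifts to \emph{two} crossing changes upstairs. Thus the hypothesis $u(\theta)=1$ only yields $u(\tilde e)\leq 2$, and what you must obstruct is $u(\tilde e)\leq 2$, not $u(\tilde e)\leq 1$. The Montesinos/linking-form test you propose is an unknotting-number-$1$ obstruction and is insufficient here. The paper instead identifies $\tilde e$ explicitly for each curve (e.g.\ $9_{10}$, $12n_{574}$, $12n_{503}$, $9_{38}$, $11a_{186}$, $12a_{518}$) and simply looks up that $u(\tilde e)\geq 3$ in each case via KnotInfo. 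If you want to avoid appealing to tabulated unknotting numbers you would need a genuine $u\leq 2$ obstruction (signature bounds, slice-genus bounds, or the like), not the $u=1$ machinery you sketch.
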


\begin{proof}
The curves $\mb{6_{12}, 7_{10}, 7_{23}, 7_{55},  7_{57}},$ and $\mb{7_{64}}$ have 
$mcu(\theta)=1$, but we will show that they have $u(\theta)=2$.
In Figure \ref{thetas}, we show the two crossing changes needed to unknot each of these $\T$-curves.  
So we need only show that the unknotting number must be greater than one.  

Each of these $\T$-curves has two nontrivial constituents and one trivial constituent.  
Let the edge that is shared by the two nontrivial constituents be called $e$.  
If $u(\theta)=1$, the crossing changed will have to occur between the edge $e$ and itself, because both of the nontrivial constituents must be unknotted.  

We take the double branched cover of $\theta$ in $S^3$ branched over the trivial constituent.    
So $\tilde{e}=J$, the lift of $e$, is a knot in $S^3$.  
If $\theta$ can be unknotted with a single crossing change in $e,$ then $\tilde{e}$ can be unknotted in (at most) two crossing changes.  
This is because the crossing change will also lift.  
Thus if $u(\tilde{e})>2$, then $u(\theta)>1$.  

In Table \ref{table1}, for each $\T$-curve we give the knot $\tilde{e}$ and its unknotting number.  
The knot type was identified using KnotSketcher \cite{KnotSketcher} and KnotFinder \cite{knotFinder}.  
Unknotting numbers were determined from KnotInfo \cite{KnotInfo}.  
So based on the unknotting number of $\tilde{e}$, the curves $\mb{6_{12}, 7_{10}, 7_{23}, 7_{55}},$  $\mb{7_{57}},$ and $\mb{7_{64}}$ cannot have unknotting number one.  
 \end{proof}

\begin{table}[htp]
\caption{For each $\T$-curve $\tilde{e}=J$ is the knot that is the lift of the third edge in the double branched cover of $\theta$ branched over the unknotted constituent of $\theta$.   
The $\theta$-curve $\mb{7_5}$ has two unknotted constituents. Both lifts give the same result.  
The knot type of $J$ was identified using KnotSketcher and KnotFinder.  
Unknotting numbers were determined from KnotInfo. }
\ra{1.2}
\begin{center}
\begin{tabular}{@{}lll@{}}
\toprule
$\theta$ & $\tilde{e}$ $=J$ & $u(J)$\\
\midrule
$\mb{6_{12}}$ & $9_{10}$ & $3$\\
$\mb{7_{5}}$ & $11n_{38}$ & $1$\\
$\mb{7_8}$ & $9_{46}$ & $2$\\
$\mb{7_{10}}$ & $12n_{574}$ & $5$\\
$\mb{7_{22}}$ & $10_{144}$ & $2$\\
$\mb{7_{23}}$ & $12n_{503}$ & $3$\\
\bottomrule
\end{tabular} {\color{white} Space}
\begin{tabular}{@{}lll@{}}
\toprule
$\theta$ & $\tilde{e}$ $=J$ & $u(J)$\\
\midrule
$\mb{7_{24}}$ & $10_{138}$ & $2$\\
$\mb{7_{55}}$ & $9_{38}$ & $3$\\
$\mb{7_{57}}$ & $11a_{186}$ & $3$\\
$\mb{7_{58}}$ & $11a_{14}$ & $[2, 3]$\\
$\mb{7_{64}}$ & $12a_{518}$ & $4$\\ \\
\bottomrule
\end{tabular}
\end{center}

\label{table1}
\end{table}


\begin{figure}[htpb!]
\begin{center}
{\includegraphics[width=2in]{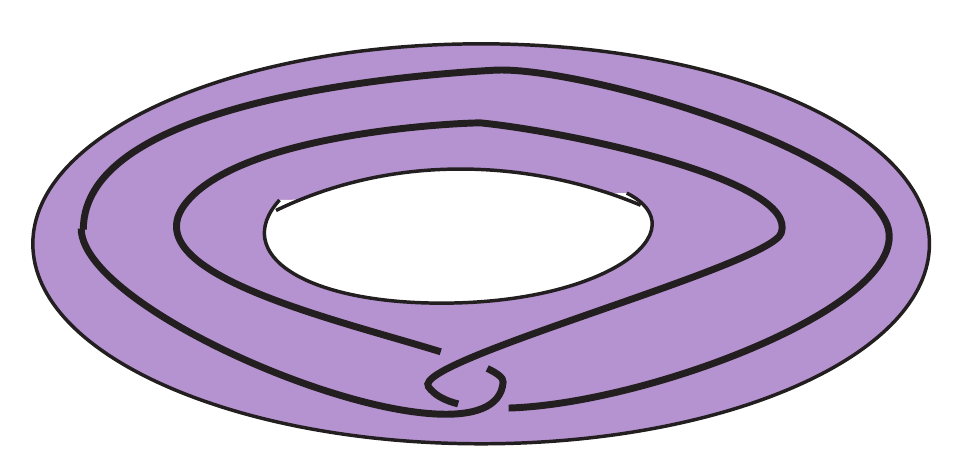}}
\caption{\small Knot in solid torus.  }
\label{fig_double}
\end{center}
\end{figure}

For the next technique we will need another definition and theorem.  
A {\bf double knot} is a knot that results from embedding the solid torus in Figure \ref{fig_double} containing the knot $K$ into $S^3$, as long as it is nontrivial.  
In work by A.~Coward and M.~Lackenby they proved:
\begin{theorem} {\cite{CowardLackenby}}  
\label{thm_unique}
If $K$ is a double knot, but $K\neq4_1$ then there is a unique crossing circle (at the clasp) and for $K=4_1$ there are exactly 2 (at the two clasps).   
 \end{theorem}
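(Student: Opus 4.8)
\textbf{Proof proposal for Theorem \ref{thm_unique}.}
The plan is to reduce the statement to the classification of crossing circles for the knots $K$ in question via their Gordian / crossing-change structure, ultimately appealing to the global uniqueness machinery Coward and Lackenby develop using the geometry of the knot exterior. First I would set up the double-knot exterior: write $E(K)$ for the exterior of a double knot $K$ with companion $J$ and pattern as in Figure \ref{fig_double}. A crossing circle $c$ for $K$ gives an annulus or twice-punctured disk in $E(K)$ whose boundary on $\bdry\nbhd(K)$ has zero algebraic intersection; equivalently $c$ bounds a crossing disk meeting $K$ twice with opposite signs. The core observation is that $c$ together with $K$ forms a two-component link $L = K \cup c$, and the surgery description of a crossing change forces $c$ to be unknotted and to bound a disk meeting $K$ geometrically twice. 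So the question becomes: how many isotopy classes (rel $K$) of such unknotted ``crossing'' curves are there?

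The key steps, in order: (1) Show that any crossing circle $c$ for the double knot $K$ can, after isotopy, be assumed disjoint from the companion torus $T = \bdry\nbhd(J)$, or else is isotopic into it --- this uses that $T$ is incompressible in $E(K)$ and a standard innermost-disk/outermost-arc argument applied to the intersection $c \cap T$ and the crossing disk. (2) If $c$ lies in the solid torus neighbourhood of $J$ containing the pattern, then analyze crossing circles for the pattern knot inside the solid torus of Figure \ref{fig_double}; the clasp region is the only place a crossing disk can sit meeting the pattern twice with cancelling signs, giving the single crossing circle at the clasp (and two when the two clasps of $4_1$ are symmetric, hence both available). (3) If $c$ is isotopic into $T$ or lies outside, show the resulting crossing change cannot change the knot type in a way compatible with being a crossing circle of $K$ --- here one uses that a crossing change supported away from the clasp either acts on the companion $J$ (so does not ``untwist'' the double) or is trivial, contradicting that distinct surgery coefficients on $c$ yield genuinely different knots. (4) Assemble: the $4_1$ case is special precisely because its two clasps are interchanged by the symmetry of the figure-eight pattern and both sit in the solid torus, so both survive step (2)–(3) as inequivalent crossing circles, whereas for every other double knot the pattern has an asymmetry (or is not self-clasped in two places) leaving exactly one.

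The main obstacle I expect is step (1) combined with step (3): ruling out ``exotic'' crossing circles that interact nontrivially with the companion torus. A crossing disk need not intersect $T$ in a controlled way a priori, and the JSJ/torus-decomposition argument has to be run carefully --- one must show that after compressing and isotoping, the crossing disk can be made either disjoint from $T$ or to intersect it in a single essential circle, and then push through to the pattern picture. This is exactly where the hard geometric input of \cite{CowardLackenby} --- their analysis of the characteristic submanifold of the exterior of $K \cup c$ and of how crossing changes interact with satellite structure --- is needed, rather than a short combinatorial argument. I would cite their framework for this core compactness/uniqueness statement and only sketch the reduction above, since the full proof of step (1)/(3) is precisely the content of their paper.
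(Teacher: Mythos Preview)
The paper does not give its own proof of Theorem~\ref{thm_unique}: it is quoted from \cite{CowardLackenby} and then invoked as a black box in the proof of Lemma~\ref{lem_78}. So there is no in-paper argument to compare your proposal against.

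On the sketch itself, there is a real gap worth naming. The theorem (read in context) concerns uniqueness of the \emph{unknotting} crossing circle for a double knot, not of crossing circles in general --- any knot admits infinitely many inequivalent crossing circles, so the entire content is that only the clasp circle yields the unknot after $\pm1$--surgery. Your steps (1)--(3) never sharply isolate this hypothesis. In step~(3), the assertion that a crossing change supported away from the pattern ``acts on the companion $J$ (so does not `untwist' the double) or is trivial'' is not a proof: a crossing change on $J$ can certainly unknot $J$, after which one still has to argue that the resulting (twisted) double of the unknot is nontrivial, and --- more seriously --- one must rule out crossing disks that essentially straddle the companion torus rather than sitting cleanly on one side. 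You correctly flag that the hard work lives in \cite{CowardLackenby}, but the reduction you outline is not the one they carry out, and the innermost-disk/outermost-arc argument in step~(1) controls only how the crossing \emph{circle} meets the companion torus, not how the crossing \emph{disk} does, which is what actually matters for locating the crossing change.
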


\begin{lemma}\label{lem_78} 
The unknotting number $u(7_8)=2.$ 
\end{lemma}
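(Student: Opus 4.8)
The plan is to show $u(\mb{7_8}) = 2$ by combining the upper bound already exhibited in Figure~\ref{thetas} (two crossing changes suffice) with a lower bound argument ruling out $u(\mb{7_8}) = 1$. Since $mcu(\mb{7_8}) = 1$, Observation~\ref{umax} alone is not enough, so a sharper obstruction is needed. The idea is to analyze what a hypothetical single unknotting crossing change would have to look like: $\mb{7_8}$ has two nontrivial constituent knots sharing an edge $e$ (and a trivial constituent), so as in the proof of Lemma~\ref{lem_many}, any single crossing change unknotting $\mb{7_8}$ must be supported on $e$ alone — a self-crossing change of the edge $e$. Passing to the double branched cover over the trivial constituent turns $e$ into a knot $J = \tilde e$, and the crossing change lifts to (at most) two crossing changes on $J$; Table~\ref{table1} records $J = 9_{46}$ with $u(9_{46}) = 2$, which unfortunately does \emph{not} exceed $2$ and so does not immediately obstruct $u(\mb{7_8}) = 1$. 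This is exactly why $\mb{7_8}$ needs the second technique.

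The key idea, and the reason Theorem~\ref{thm_unique} of Coward--Lackenby was invoked, is to identify $J = \tilde e$ as a \emph{double knot} and exploit the uniqueness of its unknotting crossing circle. First I would check that $9_{46}$ is a double knot — this is known ($9_{46}$ is the untwisted double of the trefoil, or at any rate a double knot in the sense of Figure~\ref{fig_double}) and $9_{46} \neq 4_1$, so by Theorem~\ref{thm_unique} there is a \emph{unique} crossing circle realizing the genuine unknotting operation on $J$, namely the one at the clasp. Now suppose for contradiction that $\mb{7_8}$ is unknotted by a single crossing change along a crossing arc in $e$. Lifting to the double branched cover produces a crossing circle $\tilde c$ on $J = 9_{46}$ whose $(\pm 1)$-surgery unknots $J$; but the lift is an \emph{equivariant} crossing change — invariant under the covering involution $\tau$ — and in fact a single crossing change downstairs lifts to two crossing changes upstairs that are interchanged by $\tau$, at two disjoint crossing circles (or one $\tau$-invariant one). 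By the Coward--Lackenby uniqueness, the crossing circle unknotting $9_{46}$ is unique up to isotopy fixing $J$, so any two crossing circles that each unknot $J$ must coincide; confronting this rigidity with the symmetry forced by the covering involution yields a contradiction — either the two lifted crossing circles cannot be isotoped to the single clasp circle while remaining disjoint, or the unique clasp circle fails to be $\tau$-invariant in the required way, so no equivariant single crossing change on $J$ can exist. Hence no single crossing change on $e$ unknots $\mb{7_8}$, giving $u(\mb{7_8}) \geq 2$, and combined with the upper bound, $u(\mb{7_8}) = 2$.

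The main obstacle I anticipate is making the equivariance bookkeeping in the double branched cover completely precise: one must verify that $9_{46}$ really does arise as $\tilde e$ with the covering involution acting in the expected way, pin down whether a downstairs crossing change on $e$ lifts to one $\tau$-invariant crossing circle or to a $\tau$-swapped pair, and then show that neither possibility is compatible with the \emph{uniqueness} of the unknotting crossing circle of $9_{46}$ from Theorem~\ref{thm_unique}. In particular I would need to rule out the degenerate scenario where the unique clasp crossing circle happens to be carried by $e$'s lift in a symmetric way — this is where a careful look at the explicit diagram of $\mb{7_8}$ and its lift, together with the location of the clasp in the standard double-knot picture of $9_{46}$, does the real work. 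Everything else (the upper bound, the reduction to a self-crossing change on $e$, the identification of the lift) is routine given the tools already assembled in the paper.
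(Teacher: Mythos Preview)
Your proposal has a genuine gap: the claim that $9_{46}$ is a double knot is false. Every nontrivial double knot in the sense of Figure~\ref{fig_double} has unknotting number exactly $1$ (undo the clasp), whereas Table~\ref{table1} records $u(9_{46})=2$. So Theorem~\ref{thm_unique} simply does not apply to $J=\tilde e = 9_{46}$, and there is no ``unique crossing circle at the clasp'' to play the equivariance argument against. The entire second half of your plan therefore has no foundation, and the vague equivariance bookkeeping you flag as an obstacle is not the real issue --- the real issue is that the theorem you want to invoke is inapplicable to the object you want to invoke it on.

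The paper's argument is both different and much simpler: it never passes to the double branched cover at all for $\mb{7_8}$. Instead it applies Coward--Lackenby directly to the two nontrivial \emph{constituent knots}, which are both trefoils $3_1$ --- and the trefoil \emph{is} a double knot, so each has a unique unknotting crossing circle. A hypothetical single unknotting crossing change on $\mb{7_8}$ must simultaneously unknot both constituent trefoils, so the two unique crossing circles (one for each trefoil) must represent the same crossing change on $\mb{7_8}$. But the two trefoils in $\mb{7_8}$ are mirror images of one another, so their unknotting surgeries have opposite signs ($+1$ versus $-1$); a single crossing change cannot carry both framings at once. That is the whole proof. The lesson is that Theorem~\ref{thm_unique} is meant to be applied to the constituents themselves (which are genuinely double knots with $u=1$), not to the lifted knot in the branched cover.
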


\begin{proof}  
The curve $\mb{7_8}$ has $mcu(\theta)=1$ but we will show that $u(\theta)=2$.
In Figure \ref{thetas}, we show the two crossing changes needed to unknot $\mb{7_8}$.  
So we need only show that the unknotting number must be greater than one.  
 
The two nontrivial constituents in $\mb{7_8}$ are both $3_1$.  
By Theorem \ref{thm_unique} there is a unique crossing circle for each of these constituents.  
To have $u(\mb{7_{8}})=1$ there must be a way to move the circles in the exterior of their respective knots so that they indicate the same crossing change in $\mb{7_{8}}$.   
However, these two trefoils are mirror images of each other; so the surgery coefficients are $+1$ and $-1$.  
See Figure \ref{surg}. 
So this is not possible.  
\end{proof}

\begin{figure}[htpb!]
\begin{center}
\begin{picture}(350, 120)
\put(50,10){\includegraphics[width=3in]{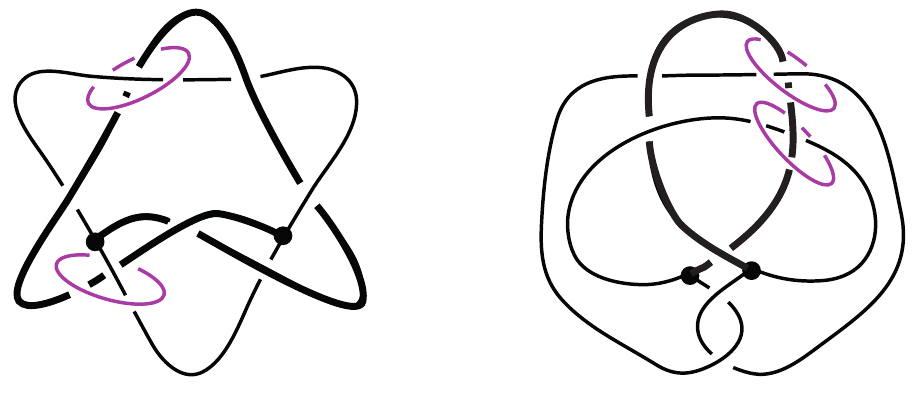}}
\put(68, 93){\tiny$\mb{-1}$}
\put(68, 22){\tiny$\mb{+1}$}
\put(90, 0){\large $\mb{7_8}$}
\put(238, 93){\tiny$\mb{+1}$}
\put(220, 63){\tiny$\mb{-1}$}
\put(215, 0){\large $\mb{7_{10}}$}
\end{picture}
\caption{\small The $\theta$-curves $\mb{7_8}$ and $\mb{7_{10}}$ with the unique crossing circles with different surgery coefficients shown.  
The common edge of the two nontrivial constituents is bold.}
\label{surg}
\end{center}
\end{figure}

Note:  This method can also be used to prove $u(\mb{7_{10}})=2$.

\begin{theorem}\label{thm:allbut4} The unknotting numbers for the $\theta$-curves in the Litherland-Moriuchi Table are determined exactly for all but  $\mb{7_5}$,  $\mb{7_{22}}$, $\mb{7_{24}}$, and $\mb{7_{58}}$ which have unknotting number 1 or 2.   See Table \ref{table-big}.  Additionally, unknotting crossing changes are shown for all of the curves in Figure \ref{thetas}.  
\end{theorem}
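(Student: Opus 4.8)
\textbf{Proof plan for Theorem~\ref{thm:allbut4}.}
The plan is to assemble the theorem from three ingredients: the upper bounds coming from the explicit crossing changes of Figure~\ref{thetas} (established in the preceding Proposition), the generic lower bound $u(\theta)\ge mcu(\theta)$ of Observation~\ref{umax}, and the refined lower bounds of Lemma~\ref{lem_many} and Lemma~\ref{lem_78} (plus the Note that the double-knot method also handles $\mb{7_{10}}$). First I would partition the $90$ prime $\theta$-curves of the Litherland-Moriuchi table according to the value of $mcu(\theta)$, i.e. according to the largest unknotting number among the three constituent knots $K_1,K_2,K_3$. For a $\theta$-curve with $mcu(\theta)=k$, Figure~\ref{thetas} exhibits a set of crossing changes realizing the unknot; in the overwhelming majority of cases that set has size exactly $k$, and then Observation~\ref{umax} pins $u(\theta)=k$ with no further work. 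So the bulk of the verification is the bookkeeping claim: for every curve in the table except the handful treated separately, the number of gray crossing changes in Figure~\ref{thetas} equals $\max_i u(K_i)$, where the constituent knot types are read off from the diagram and their unknotting numbers from KnotInfo.

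Second I would isolate the curves where this naive matching fails, namely the eleven curves $\mb{6_{12}},\mb{7_5},\mb{7_8},\mb{7_{10}},\mb{7_{22}},\mb{7_{23}},\mb{7_{24}},\mb{7_{55}},\mb{7_{57}},\mb{7_{58}},\mb{7_{64}}$, each of which has $mcu(\theta)=1$ but is unknotted in Figure~\ref{thetas} by two crossing changes. For these the upper bound is $2$ and the question is whether $u(\theta)=1$ is possible. Lemma~\ref{lem_many} rules out $u(\theta)=1$ for $\mb{6_{12}},\mb{7_{10}},\mb{7_{23}},\mb{7_{55}},\mb{7_{57}},\mb{7_{64}}$ via the lift $\tilde e=J$ in the double branched cover over the trivial constituent, using $u(J)>2$ read from Table~\ref{table1}. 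Lemma~\ref{lem_78} together with its Note rules out $u(\theta)=1$ for $\mb{7_8}$ and $\mb{7_{10}}$ via the Coward-Lackenby uniqueness theorem and the mirror/surgery-coefficient obstruction. Combining these, exactly the four curves $\mb{7_5},\mb{7_{22}},\mb{7_{24}},\mb{7_{58}}$ remain: for each of them Table~\ref{table1} only gives $u(\tilde e)\le 2$ (indeed $u(\tilde e)=1$ or $[2,3]$), so the double-cover method does not obstruct $u(\theta)=1$, and the double-knot method does not apply to their constituents either; hence all we can assert is $u(\theta)\in\{1,2\}$, which is exactly the statement of the theorem. Finally I would collect the resulting values into Table~\ref{table-big} and note that the unknotting crossing changes of Figure~\ref{thetas} certify each stated value (or the upper bound $2$ in the four ambiguous cases).

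The main obstacle is not any single deep argument but the completeness and correctness of the case analysis over $90$ curves: one must be certain that the list of ``exceptional'' curves is exhaustive, i.e. that for every other curve the Figure~\ref{thetas} crossing-change count genuinely equals $mcu(\theta)$, and that the constituent knot identifications (hence their unknotting numbers) are right. A secondary subtlety is that Observation~\ref{umax} gives $u(\theta)\ge mcu(\theta)$ but says nothing about curves all of whose constituents are unknots; those curves necessarily have $mcu(\theta)=0$ yet $u(\theta)\ge 1$ since they are nontrivial, so for them the matching claim must instead be ``Figure~\ref{thetas} uses exactly one crossing change,'' and one needs the separate obstructions above (or a direct argument) only when two changes are used — which is precisely why $\mb{6_{12}},\mb{7_5},\mb{7_8},\mb{7_{10}},\mb{7_{22}},\mb{7_{23}},\mb{7_{24}},\mb{7_{55}},\mb{7_{57}},\mb{7_{58}},\mb{7_{64}}$ are singled out. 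Once this case division is checked, the theorem follows by direct assembly, with $\mb{7_5},\mb{7_{22}},\mb{7_{24}},\mb{7_{58}}$ left open at this stage and resolved later in the Appendix.
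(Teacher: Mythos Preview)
Your proposal is correct and follows essentially the same approach as the paper's own proof: upper bounds from the explicit crossing changes in Figure~\ref{thetas}, lower bounds from Observation~\ref{umax} (supplemented by nontriviality for the six curves $\mb{5_1},\mb{6_1},\mb{7_1},\mb{7_2},\mb{7_3},\mb{7_4}$ with all-unknot constituents), the eleven exceptional curves isolated, seven of them resolved by Lemmas~\ref{lem_many} and \ref{lem_78}, and the remaining four left at $u(\theta)\in\{1,2\}$. Your explicit flagging of the $mcu=0$ subtlety matches the paper's separate treatment of those six curves as its first case.
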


In the Appendix, Theorem~\ref{thm:kenmain} shows that the $\theta$--curves $\mb{7_5}$,  $\mb{7_{22}}$, $\mb{7_{24}}$, and $\mb{7_{58}}$ all have unknotting number 2.

\begin{proof}
The $\theta$-curves $\mb{5_1, 6_1, 7_1, 7_2, 7_3},$ and $\mb{7_4}$ are all examples of nontrivial $\theta$-curves where all of the constituent knots are unknots.  
(A list of the $\theta$-curves together with their constituent knots is given in Table \ref{table-big}.)
So they must have at least unknotting number 1.    
These were all shown to have $u(\T)=1$ by finding an unknotting crossing.  
See Figure \ref{thetas}.

For the remaining $\theta$-curves, those with nontrivial constituents, we use Observation \ref{umax}.  
It says $u(\theta)\geq mcu(\theta)$.
With the exception of the 11 curves $\mb{6_{12}, 7_5, 7_8, 7_{10}, 7_{22}, 7_{23}},$ $\mb{7_{24}},$ $\mb{7_{55},  7_{57}, 7_{58}},$ and $\mb{7_{64}}$, the remaining curves had $u(\theta)= mcu(\theta).$  See Figure \ref{thetas}.

It was shown in Lemma \ref{lem_many} and Lemma \ref{lem_78} that the curves $\mb{6_{12}, 7_8, 7_{10}, 7_{23}},$ $\mb{7_{55},  7_{57}},$ and $\mb{7_{64}}$ have $u(\T)=2$.  
The four remaining curves $\mb{7_5,  7_{22}},$ $\mb{7_{24}},$ and $\mb{7_{58}}$ have $mcu(\theta)=1$ however the only known means of unknotting take two crossing changes.  
So the unknotting number for these curves is 1 or 2.  
This completes our proof.  
 \end{proof}

\newpage

\begin{figure}[h]
\begin{center}
\includegraphics[scale=1.1]{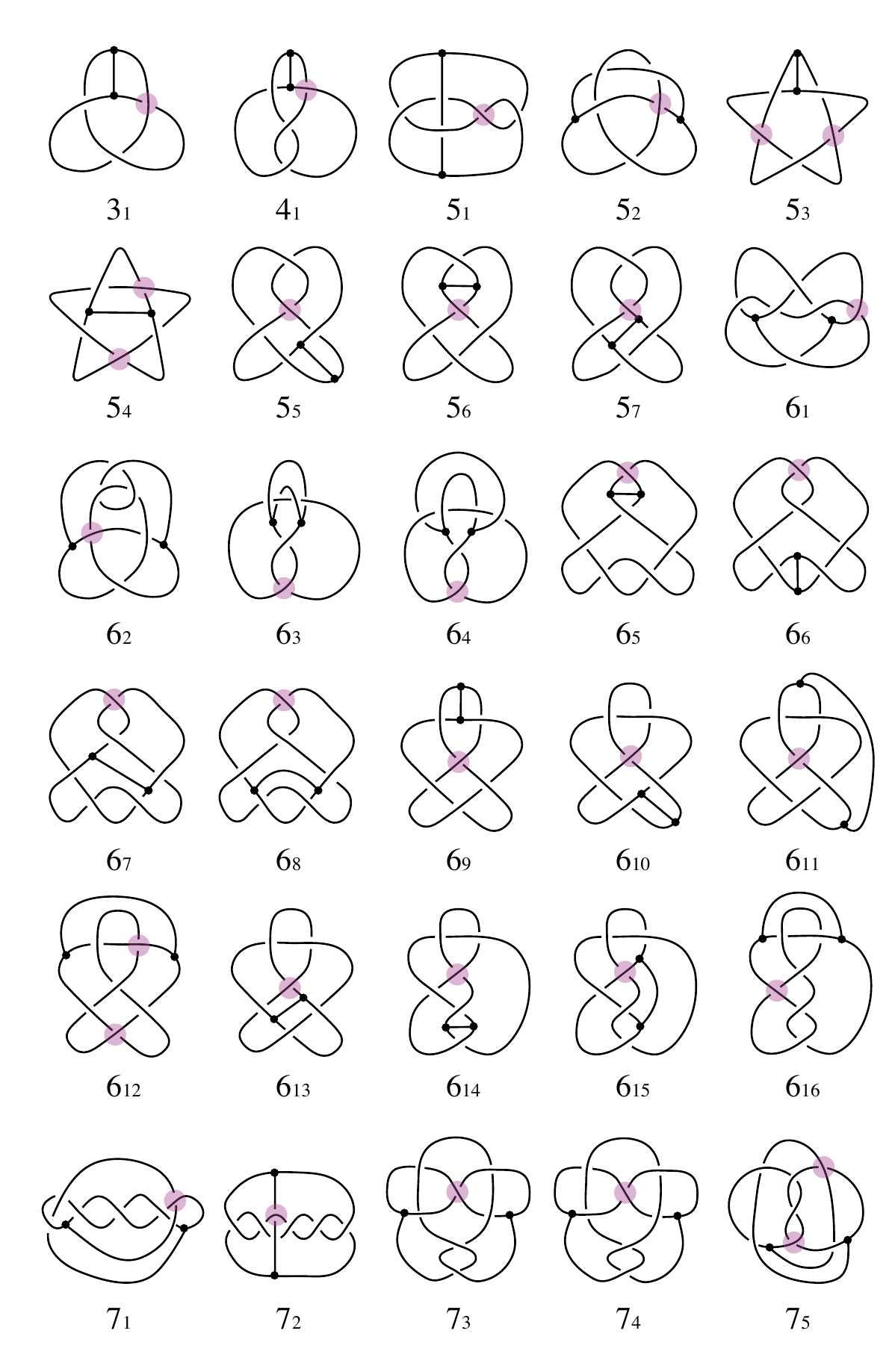}
\caption{Theta curves with their unknotting crossing changes shown in gray (purple). }\label{thetas}
\end{center}
\end{figure}
\setcounter{figure}{6}
\begin{figure}[h]
\begin{center}
\includegraphics[scale=1.1]{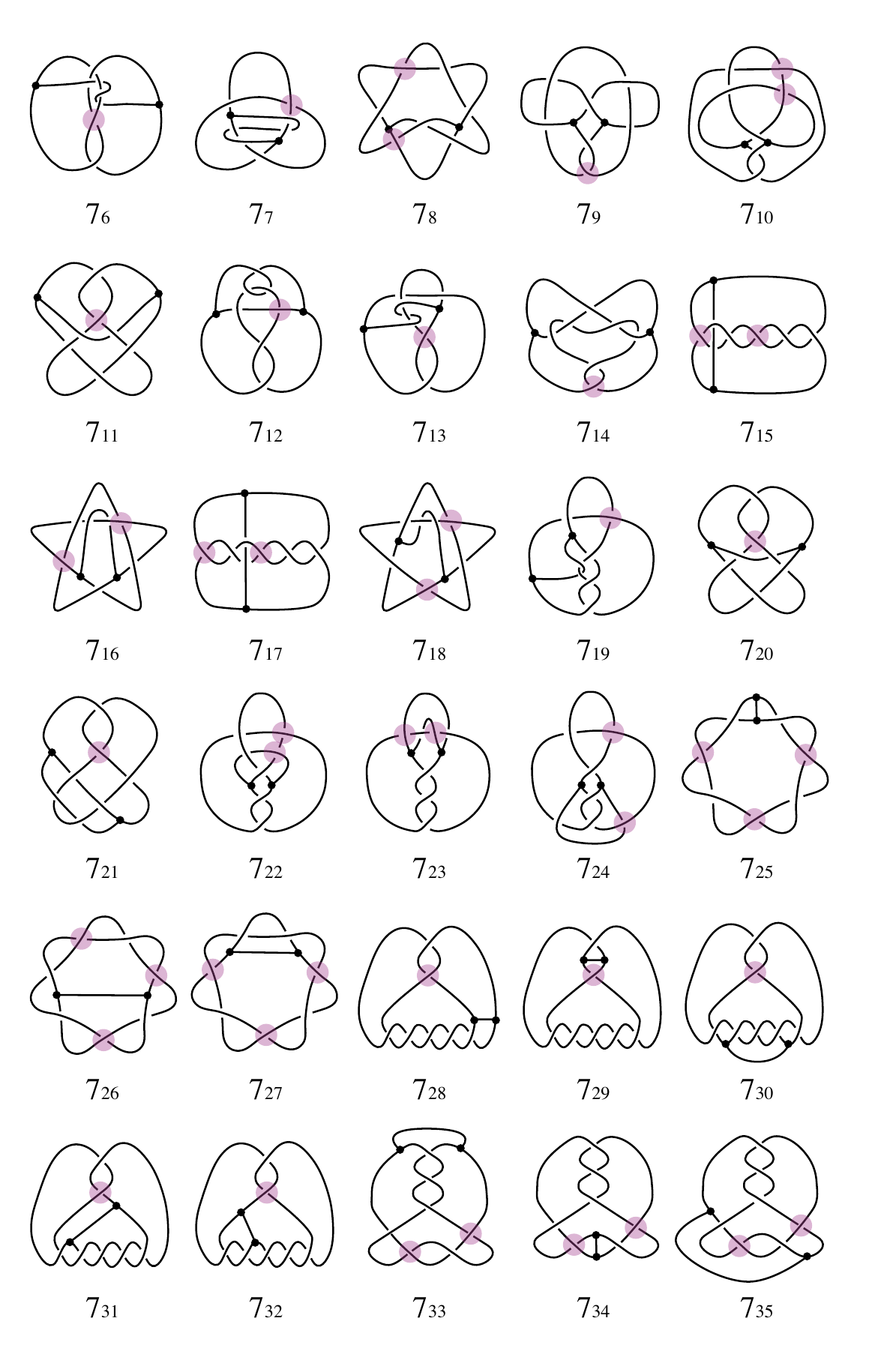}
\caption{{\sc continued.}  Theta curves with their unknotting crossing changes shown in gray (purple). }
\end{center}
\end{figure}
\setcounter{figure}{6}
\begin{figure}[h]
\begin{center}
\includegraphics[scale=1.1]{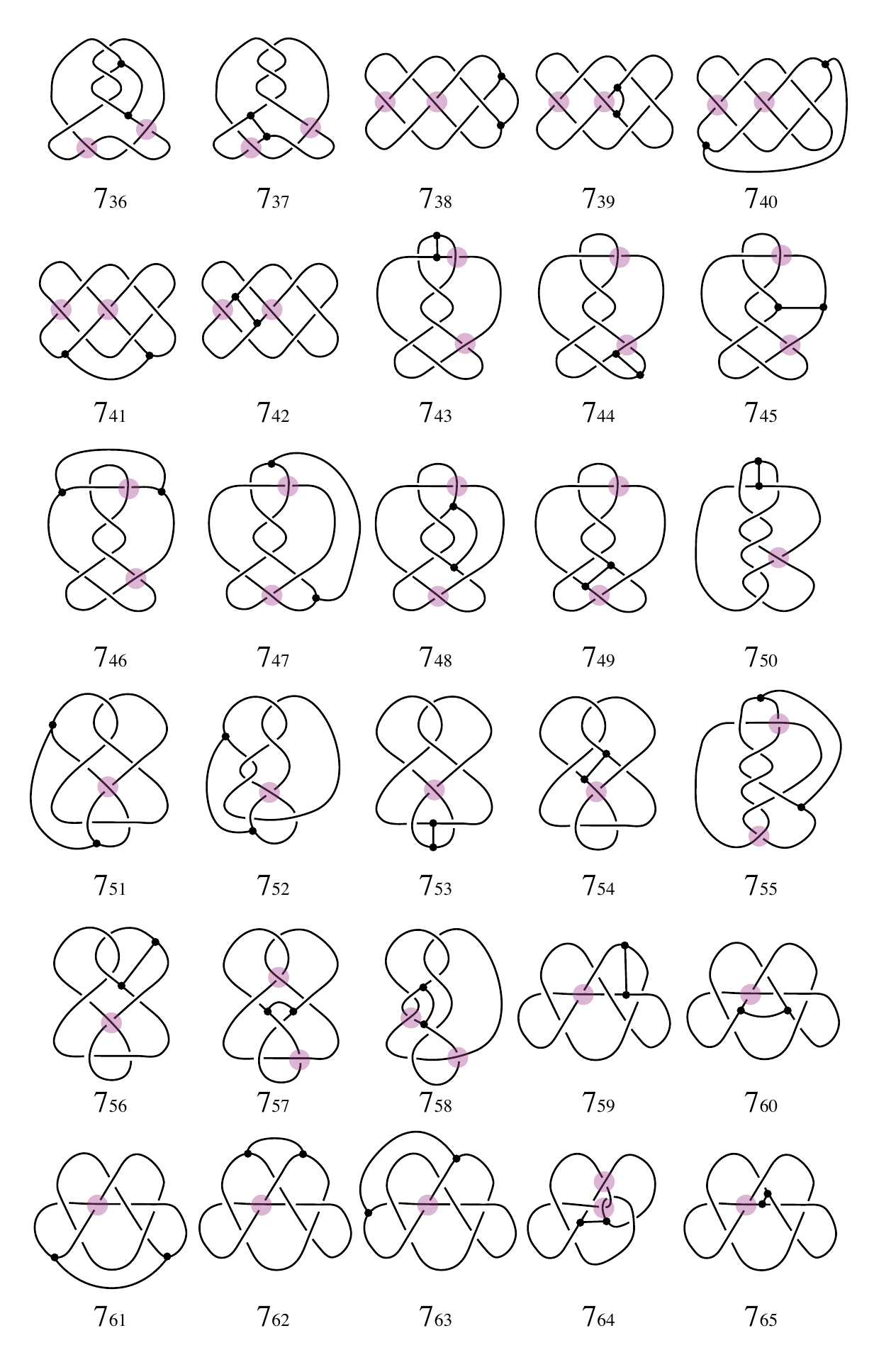}
\caption{{\sc continued.}  Theta curves with their unknotting crossing changes shown in gray (purple). }
\end{center}
\end{figure}

\pagebreak
\newpage
\begin{table}[htp]
  \caption{Theta curves, their constituents, their maximal constituent unknotting number, and their unknotting numbers.  }
  \ra{1.2}
  \footnotesize
  \begin{center}
\begin{multicols}{3}
\begin{tabular}{@{}lccccc@{}}\toprule
    $\theta$  &\multicolumn{3}{c}{C. Knots} & $mcu$ & $u$\\
    \midrule
    $\mb{3_1}$ & 2x$0_1$ & $3_1$ & &1& 1\\ 
    $\mb{4_1}$ & 2x$0_1$ & $4_1$ & &1& 1\\ 
    $\mb{5_1}$ & 3x$0_1$ &  & &0& 1\\ 
    $\mb{5_2}$ & 2x$0_1$ & $3_1$ & &1& 1\\ 
    $\mb{5_3}$ & 2x$0_1$ & $5_1$ & &2& 2\\ 
    $\mb{5_4}$ & $0_1$ & $3_1$ & $5_1$ &2& 2 \\ 
    $\mb{5_5}$ & 2x$0_1$ & $5_2$ & &1& 1\\ 
    $\mb{5_6}$ & 2x$0_1$ & $5_2$ & &1& 1\\ 
    $\mb{5_7}$ & $0_1$ & $3_1$ & $5_2$&1& 1\\ 
    $\mb{6_1}$ & 3x$0_1$ &  & &0& 1\\ 
    $\mb{6_2}$ & 2x$0_1$ & $3_1$ & &1& 1\\ 
    $\mb{6_3}$ & $0_1$ & $3_1$ & $4_1$ &1& 1\\ 
    $\mb{6_4}$ & $0_1$ & $3_1$ & $4_1$ &1& 1\\ 
    $\mb{6_5}$ & 2x$0_1$ & $6_1$ & &1& 1\\ 
     $\mb{6_6}$ & 2x$0_1$ & $6_1$ & &1& 1\\ 
     $\mb{6_7}$ & 2x$0_1$ & $6_1$ & &1& 1\\ 
     $\mb{6_8}$ & $0_1$ & $4_1$ & $6_1$ &1& 1\\ 
     $\mb{6_9}$ & 2x$0_1$ & $6_2$ & &1& 1\\ 
     $\mb{6_{10}}$ & 2x$0_1$ & $6_2$ & &1& 1\\
     $\mb{6_{11}}$ & 2x$0_1$ & $6_2$ & &1& 1\\ 
      $\mb{6_{12}}$ & $0_1$ & $3_1$ & $6_2$ &1& 2\\ 
      $\mb{6_{13}}$ & $0_1$ & $4_1$ & $6_2$ &1& 1\\ 
     $\mb{6_{14}}$ & 2x$0_1$ & $6_3$ & &1& 1\\ 
     $\mb{6_{15}}$ & 2x$0_1$ & $6_3$ & &1& 1\\ 
     $\mb{6_{16}}$ & $0_1$ & $3_1$ & $6_3$ &1& 1\\ 
     $\mb{7_1}$ & 3x$0_1$ &  & &0& 1\\ 
    $\mb{7_2}$ & 3x$0_1$ &  & &0& 1\\ 
    $\mb{7_3}$ & 3x$0_1$ &  & &0& 1\\ 
    $\mb{7_4}$ & 3x$0_1$ &  & &0& 1\\ 
    $\mb{7_5}$ & 2x$0_1$ & $3_1$ & &1& 2\\ 
   \bottomrule
      \end{tabular}

\begin{tabular}{@{}lccccc@{}}\toprule
    $\theta$  &\multicolumn{3}{c}{C. Knots} & $mcu$ & $u$\\
    \midrule
     $\mb{7_6}$ & 2x$0_1$ & $3_1$ & &1& 1\\ 
    $\mb{7_7}$ & 2x$0_1$ & $3_1$ & &1& 1\\ 
    $\mb{7_8}$ & $0_1$ & $3_1$ &$3_1$ &1& 2\\ 
    $\mb{7_9}$ & $0_1$ & $3_1$ &$3_1$ &1& 1\\ 
    $\mb{7_{10}}$ & $0_1$ & $3_1$ &$3_1$ &1& 2\\ 
    $\mb{7_{11}}$ & 2x$0_1$ & $5_2$ & &1& 1\\ 
    $\mb{7_{12}}$ & 2x$0_1$ & $4_1$ & &1& 1\\ 
    $\mb{7_{13}}$ & 2x$0_1$ & $4_1$ & &1& 1\\ 
    $\mb{7_{14}}$ & $0_1$ & $4_1$ &$4_1$ &1& 1\\ 
    $\mb{7_{15}}$ & 2x$0_1$ & $5_1$ & &2& 2\\ 
       $\mb{7_{16}}$ & 2x$0_1$ & $5_1$ & &2& 2\\ 
    $\mb{7_{17}}$ & 2x$0_1$ & $5_1$ & &2& 2\\ 
    $\mb{7_{18}}$ & $0_1$ & $5_1$ & $5_2$&2& 2\\ 
    $\mb{7_{19}}$ & 2x$0_1$ & $5_2$ & &1& 1\\ 
    $\mb{7_{20}}$ & 2x$0_1$ & $5_2$ & &1& 1\\ 
    $\mb{7_{21}}$ & 2x$0_1$ & $5_2$ & &1& 1\\ 
    $\mb{7_{22}}$ & $0_1$ & $3_1$ & $5_2$&1& 2\\ 
    $\mb{7_{23}}$ & $0_1$ & $4_1$ & $5_2$&1& 2\\ 
    $\mb{7_{24}}$ & $0_1$ & $4_1$ & $5_2$&1& 2\\ 
    $\mb{7_{25}}$ & 2x$0_1$ & $7_1$ & &3& 3\\ 
    $\mb{7_{26}}$ & $0_1$ & $3_1$ & $7_1$&3& 3\\ 
    $\mb{7_{27}}$ & $0_1$ & $5_1$ & $7_1$&3& 3\\ 
    $\mb{7_{28}}$ & 2x$0_1$ & $7_2$ & &1& 1\\ 
    $\mb{7_{29}}$ & 2x$0_1$ & $7_2$ & &1& 1\\ 
    $\mb{7_{30}}$ & 2x$0_1$ & $7_2$ & &1& 1\\ 
    $\mb{7_{31}}$ & $0_1$ & $3_1$ & $7_2$&1& 1\\ 
    $\mb{7_{32}}$ & $0_1$ & $5_2$ & $7_2$ &1& 1\\ 
    $\mb{7_{33}}$ & 2x$0_1$ & $7_3$ & &2& 2\\ 
    $\mb{7_{34}}$ & 2x$0_1$ & $7_3$ & &2& 2\\ 
    $\mb{7_{35}}$ & $0_1$ & $3_1$ & $7_3$&2& 2\\    \bottomrule
          \end{tabular}

\begin{tabular}{@{}lccccc@{}}\toprule
    $\theta$  &\multicolumn{3}{c}{C. Knots} & $mcu$ & $u$\\
    \midrule
    
    $\mb{7_{36}}$ & $0_1$ & $5_1$ & $7_3$&2& 2\\ 
    $\mb{7_{37}}$ & $0_1$ & $5_2$ & $7_3$&2& 2\\ 
    $\mb{7_{38}}$ & 2x$0_1$ & $7_4$ & &2& 2\\ 
    $\mb{7_{39}}$ & 2x$0_1$ & $7_4$ & &2& 2\\ 
    $\mb{7_{40}}$ & $0_1$ & $3_1$ & $7_4$ &2& 2\\ 
    $\mb{7_{41}}$ & $0_1$ & $3_1$ & $7_4$ &2& 2\\ 
    $\mb{7_{42}}$ & $0_1$ & $5_2$ & $7_4$ &2& 2\\ 
    $\mb{7_{43}}$ & 2x$0_1$ & $7_5$ & &2& 2\\ 
    $\mb{7_{44}}$ & 2x$0_1$ & $7_5$ & &2& 2\\ 
    $\mb{7_{45}}$ & $0_1$ & $3_1$ & $7_5$ &2& 2\\ 
    $\mb{7_{46}}$ & $0_1$ & $3_1$ & $7_5$ &2& 2\\ 
    $\mb{7_{47}}$ & $0_1$ & $3_1$ & $7_5$ &2& 2\\ 
    $\mb{7_{48}}$ & $0_1$ & $5_1$ & $7_5$ &2& 2\\ 
    $\mb{7_{49}}$ & $0_1$ & $5_2$ & $7_5$ &2& 2\\ 
    $\mb{7_{50}}$ & 2x$0_1$ & $7_6$ & &1& 1\\ 
    $\mb{7_{51}}$ & 2x$0_1$ & $7_6$ & &1& 1\\ 
    $\mb{7_{52}}$ & 2x$0_1$ & $7_6$ & &1& 1\\ 
    $\mb{7_{53}}$ & 2x$0_1$ & $7_6$ & &1& 1\\ 
    $\mb{7_{54}}$ & 2x$0_1$ & $7_6$ & &1& 1\\ 
    $\mb{7_{55}}$ & $0_1$ & $3_1$ & $7_6$&1& 2\\ 
    $\mb{7_{56}}$ & $0_1$ & $3_1$ & $7_6$&1& 1\\ 
    $\mb{7_{57}}$ & $0_1$ & $4_1$ & $7_6$&1& 2\\ 
    $\mb{7_{58}}$ & $0_1$ & $5_2$ & $7_6$&1& 2\\ 
    $\mb{7_{59}}$ & 2x$0_1$ & $7_7$ & &1& 1\\ 
    $\mb{7_{60}}$ & 2x$0_1$ & $7_7$ & &1& 1\\ 
    $\mb{7_{61}}$ & 2x$0_1$ & $7_7$ & &1& 1\\ 
    $\mb{7_{62}}$ & 2x$0_1$ & $7_7$ & &1& 1\\ 
    $\mb{7_{63}}$ & 2x$0_1$ & $7_7$ & &1& 1\\ 
    $\mb{7_{64}}$ & $0_1$ & $3_1$ & $7_7$&1& 2\\ 
    $\mb{7_{65}}$ & $0_1$ & $4_1$ & $7_7$&1& 1\\ 
    \bottomrule

      \end{tabular}
 \end{multicols}
\end{center}

\label{table-big}
 \end{table}

\appendix
\section{On bandings of unknotting number 1 $\theta$--curves} 

\begin{center}
{\sc Kenneth l.\ Baker}
\end{center}

In Theorem~\ref{thm:allbut4} above, Buck-O'Donnol  determine the unknotting numbers for all $\theta$--curves in the Litherland-Moriuchi table  \cite{Moriuchi} of prime knotted $\theta$--curves that admit diagrams with at most $7$ crossings, except for the $\theta$--curves $\mb{7_{5}}$, $\mb{7_{22}}$, $\mb{7_{24}}$, and $\mb{7_{58}}$. In this appendix we provide another method for showing the unknotting number of a $\theta$--curve is not $1$ which allows us to confirm that the unknotting numbers of each of these four $\theta$--curves is $2$.

\begin{theorem}\label{thm:kenmain}
The unknotting numbers of the $\theta$--curves $\mb{7_{5}}$, $\mb{7_{22}}$, $\mb{7_{24}}$, and $\mb{7_{58}}$ are all $2$.
\end{theorem}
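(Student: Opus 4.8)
The plan is to exhibit, for each of the four $\theta$--curves $\mb{7_5}$, $\mb{7_{22}}$, $\mb{7_{24}}$, and $\mb{7_{58}}$, an explicit unknotting sequence of two crossing changes (these are already drawn in Figure~\ref{thetas}), so that $u(\theta) \le 2$; the entire content of the appendix is the reverse inequality $u(\theta) \ge 2$, i.e.\ ruling out $u(\theta) = 1$. Since each of these $\theta$--curves has $mcu(\theta)=1$ (its constituent knots are $0_1$, $0_1$, and one of $3_1$ or $5_2$), Observation~\ref{umax} is not enough, and the Coward--Lackenby uniqueness argument of Lemma~\ref{lem_78} does not apply directly because the knotted constituents are not double knots in the relevant way. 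Instead I would reformulate ``$u(\theta)=1$'' as a statement about \emph{bandings}: a crossing change on a $\theta$--curve $\theta$ is supported in a $3$--ball meeting $\theta$ in two arcs of a single edge $e$ (forced, as in Lemma~\ref{lem_many}, because both constituents through the changed crossing must become trivial, and here the changed crossing must involve $e$ with itself since otherwise at least one knotted constituent survives). Passing to the double branched cover $\Sigma_2(S^3, C)$ over the trivial constituent $C$ disjoint from $e$, the edge $e$ lifts to a knot $J = \tilde e$, and an unknotting crossing change on $e$ lifts to a pair of crossing changes on $J$; moreover the $\theta$--curve being trivial lifts to $J$ being unknotted, so this recovers Lemma~\ref{lem_many}. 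The new ingredient (Lemmas~\ref{lem:unlinking} and \ref{lem:twist}) is a finer obstruction: the single crossing change on $e$ is realized by $(\pm1)$--surgery on a crossing circle $c$ bounding a crossing disk $D$ met twice by $e$ and zero times by $C$, so $c$ lifts to a \emph{two-component unlink or a link} $\tilde c$ in $\Sigma_2$, on which surgery converts $J$ to the unknot; the algebraic and geometric constraints on such an equivariant unknotting surgery — e.g.\ linking numbers of $J$ with the components of $\tilde c$, or the signature/Alexander-polynomial jump permitted by a single band move relating $J$ to its unknotting, or a twist-region analysis — are what must fail.

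Concretely, the key steps in order are: (1) Reduce $u(\theta) = 1$ to the existence of a crossing disk $D$ for a single edge $e$ (the edge common to the two constituents containing the knotted constituent, or in the $\mb{7_5}$ case noting both relevant constituents are unknots so the disk meets the knotted $3_1$ constituent's arc of $e$) with $\partial D = c$ such that $(\pm1)$--surgery on $c$ trivializes $\theta$; this is the banding reformulation, Lemma~\ref{lem:unlinking}-type statement. (2) Lift to $Y = \Sigma_2(S^3,C)$: identify $Y = S^3$ (each $C$ here is the unknot), compute $J = \tilde e$ for each of the four $\theta$--curves using the diagrams in Figure~\ref{thetas} and a Montesinos-trick / branched-cover calculation, confirming the entries $11n_{38}$, $10_{144}$, $10_{138}$, $11a_{14}$ of Table~\ref{table1}. (3) Show $\tilde c$ is (generically) an unknot or a specific two-component link bounding an annulus/disks in the complement of $J$, and that the lifted surgery is a \emph{single twist} along $\tilde c$ taking $J$ to the unknot — i.e.\ $J$ would be obtained from the unknot by $\pm 1/n$ or $\pm 1$ surgery on an unknotted curve, hence $J$ admits a lens-space or small-Seifert-fibered branched double cover, or $J$ is a twisted double / satellite in a constrained way (this is the role of Lemma~\ref{lem:twist}, presumably invoking Coward--Lackenby Theorem~\ref{thm_unique} or Kronheimer--Mrowka / Ozsváth--Szabó constraints on which knots are ``unknotted by a twist''). (4) Check that each of $11n_{38}$, $10_{144}$, $10_{138}$, $11a_{14}$ \emph{fails} the resulting criterion — e.g.\ its double branched cover is not the relevant surgery on the double branched cover of the unknot, or its signature or Tristram--Levine signatures or $\tau$/$s$ invariants obstruct being unknotted by the required twist — and conclude $u(\theta) \ge 2$ for all four, hence $u(\theta)=2$.

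The main obstacle I expect is Step~(3): pinning down exactly what the lifted crossing circle $\tilde c$ and the lifted surgery look like, and then extracting a usable obstruction. The subtlety is that $u(J) = 1$ or $2$ for these four knots (Table~\ref{table1}), so one \emph{cannot} simply read off $u(\theta) \ge 2$ from $u(J) > 2$ as in Lemma~\ref{lem_many}; the argument must use the extra structure that the pair of crossing changes on $J$ is not arbitrary but equivariant under the branched-cover involution and realized by a \emph{single} crossing circle $c$ downstairs (so the two changes are ``linked'' by the symmetry). The heart of the appendix is therefore a careful equivariant-surgery / banding analysis showing that the specific knots $11n_{38}$, $10_{144}$, $10_{138}$, $11a_{14}$ cannot arise this way — likely by combining the uniqueness of unknotting crossing circles for double knots (Theorem~\ref{thm_unique}) with a determinant or signature computation showing the candidate lifted crossing circle would have to be an unknot linking $J$ in a way incompatible with these knots' invariants. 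Once that obstruction is in hand, the four cases are dispatched uniformly, and $u(\mb{7_5}) = u(\mb{7_{22}}) = u(\mb{7_{24}}) = u(\mb{7_{58}}) = 2$ follows immediately together with the two--crossing-change unknottings of Figure~\ref{thetas}.
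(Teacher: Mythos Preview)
Your plan diverges from the paper's actual method and has a real gap in Step~(1). The appendix does \emph{not} work in the double branched cover over a trivial constituent. Lemma~\ref{lem:unlinking} instead observes that a crossing disk for $\Th$ is disjoint from \emph{some} edge $e_i$ (not necessarily meeting a single edge twice), and then \emph{bands} $\Th$ along $e_i$ to produce a two-component link $L_i^n$ in $S^3$. The hypothesis $u(\Th)=1$ forces $u(L_i^n)\le 1$ for one of these links, with $n\in\{-1,0,1\}$ and the knot types of its components constrained. The obstructions are then direct link invariants: the determinant condition $\det(L)=2c^2$, the signature--nullity inequality $1+\eta\ge|\sigma|$, and Lemma~\ref{lem:twist}, which says that if $u(L)=1$ with linking number $0$ then twisting one component about the other yields a family of knots all of unknotting number $\le 1$ --- so finding a single twist with $u>1$ (here $11n79$ for $\mb{7_5}$ and $7_4$ for $\mb{7_{24}}$, via {\tt SnapPy} and KnotInfo) finishes the job. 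No branched covers, no equivariance.

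Your reduction ``the crossing disk meets a single edge $e$ twice'' is exactly what fails for $\mb{7_5}$. Its constituents are $0_1,0_1,3_1$ (not ``$0_1,0_1$, and one of $3_1$ or $5_2$'' for all four --- check Table~\ref{table-big}: $\mb{7_{22}},\mb{7_{24}},\mb{7_{58}}$ each have two nontrivial constituents). With only $K_3=3_1$ nontrivial, the unknotting crossing could be of type $e_1$--$e_2$, in which case the crossing disk is disjoint only from $e_3$, and $K_3=e_1\cup e_2$ is the trefoil, not an unknot --- so there is no trivial constituent disjoint from the crossing disk to branch over, and your lift $J=\tilde e$ never gets defined. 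The paper's Table~\ref{table:calcs} handles precisely this case via the banding $L_3^0$ (and $L_3^{\pm1}$) along $e_3$. For the other three $\theta$--curves your reduction to a single edge does go through, but the subsequent ``equivariant unknotting'' obstruction you sketch remains entirely speculative; the paper's banding argument replaces it with concrete, computable link invariants.
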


Labeling the three edges of a $\theta$--curve $\Th$ as $e_1, e_2, e_3$, we obtain three {\bf constituent knots} $K_i = \Th-\Int{e_i}$, for $i=1,2,3$, obtained by deleting the interior of an edge.
Buck-O'Donnol define the {\bf maximal constituent unknotting number} of a $\theta$--curve $\Th$ with constituent knots $K_1$, $K_2$, and $K_3$ to be 
\[ mcu(\Th) = \max\{ u(K_1), u(K_2), u(K_3)\}, \]
and they observe that $u(\Th) \geq mcu(\Th)$. See Definition~\ref{def:mcu} and Observation~\ref{umax}.

Instead of deleting an edge of the $\theta$--curve $\Th$, we may use the edge $e_i$ to determine  bandings of the constituent knot $K_i$ to a family of two-component links $L_i^n$ as follows:  Choose an orientation on the edges of $\Th$ so that they all begin at the same vertex.  For $\{i,j,k\} = \{1,2,3\}$, band $K_i$ along $e_i$ to produce a two component link. (Specifically, for the interval $I=[-1,1]$, let $b$ be an embedding of the rectangle $I \times I$  into $S^3$ so that $b(\{0\} \times I) = e_i$, $b(I \times \bdry I) \subset K_i$, and $(K_i - b(I\times I)) \cup b(\bdry I \times I)$ is a link of two components.)  The resulting link has components isotopic to the constituent knots $K_j$ and $K_k$ that we orient according to the orientations on the edges $e_j$ and $e_k$.  Since any two of these bandings along $e_i$ differ by an integral number of full twists in the band, the resulting links are distinguished by their linking numbers.  Let $L_i^n$ be the one whose components have linking number $n$.

\begin{lemma}\label{lem:unlinking}
If $u(\Th) = 1$, then for some $i=1,2,3$, either
\begin{enumerate}
\item at least one component of $L_i^0$ is unknotted and $u(L_i^0) \leq 1$, or
\item both components of $L_i^0$ are unknotted and either $u(L_i^1) = 1$ or $u(L_i^{-1}) = 1$.
\end{enumerate}
\end{lemma}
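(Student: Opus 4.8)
The plan is to start from the hypothesis $u(\Th)=1$, fix a diagram of $\Th$ realizing a single unknotting crossing change, and analyze which edges that crossing change involves. A crossing change is a local modification at a crossing between two strands of the diagram; since $\Th$ has only three edges, the two strands at the unknotting crossing belong to edges $e_a$ and $e_b$ for some (not necessarily distinct) $a,b \in \{1,2,3\}$. First I would dispose of the self-crossing case: if $a=b$, say the crossing change is between $e_i$ and itself, then it leaves the constituent knot $K_i = \Th - \Int{e_i}$ untouched, so $K_i$ is already unknotted; moreover the crossing arc realizing the change can be pushed off $e_i$ slightly so that, after banding $K_i$ along $e_i$ to the $0$-framed band, the same crossing change is supported on (a pushoff of) one component, exhibiting $u(L_i^0)\le 1$ with that component — corresponding to one of $K_j, K_k$ — unknotted after the change, while the other component $K_i$ is unknotted to begin with. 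This gives conclusion (1) for that index $i$.

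The main case is when the unknotting crossing is between two distinct edges, say $e_j$ and $e_k$, with $\{i,j,k\}=\{1,2,3\}$. Here I would observe that such a crossing change does not alter the third edge $e_i$, hence does not alter the cycle $e_j \cup e_k = K_i$; wait — it does alter $K_i$, since $e_j$ and $e_k$ together form $K_i$, so a crossing between them is a crossing change on $K_i$. Let me recast: the crossing change is disjoint from the interior of $e_i$, so it is a crossing change on the constituent knot $K_i = e_j\cup e_k$ that unknots the entire $\theta$-curve. In particular $K_i$ becomes unknotted, so $u(K_i)\le 1$. The key geometric point is that the band defining $L_i^n$ runs along $e_i$, which is disjoint from the crossing arc; therefore the crossing change survives the banding operation and converts $L_i^n$ into the banding of the unknotted $\theta$-curve $0_\theta$ along $e_i$. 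Now I need to identify the bandings of the trivial $\theta$-curve: banding $0_\theta$ along any one of its edges with the $0$-framed band produces the $2$-component unlink (both components unknotted, linking number $0$), so the crossing change realizes $u(L_i^0)\le 1$ if we set up the banding framing so that the starting link is $L_i^0$. This handles conclusion (1) again — but only if at least one component of $L_i^0$ happens to be unknotted, which the argument does indeed deliver since the target of the crossing change is the unlink and a single crossing change can unknot at most what it touches; I would argue that the unchanged component (the one the crossing arc misses) must already be unknotted, forcing us into case (1) unless the crossing arc meets \emph{both} components.

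The delicate sub-case, and what I expect to be the main obstacle, is when the single unknotting crossing change on $K_i$ is a crossing between $e_j$ and $e_k$ that genuinely involves both arcs of the band's pushoff, i.e. both components of $L_i^n$, so that neither component of $L_i^0$ is individually unknotted by it. In that situation I would argue that $K_j$ and $K_k$ must each already be unknotted (since the crossing change, being between $e_j$ and $e_k$, fixes each of the knots $K_k = e_i\cup e_j$ and $K_j = e_i\cup e_k$ — because it is disjoint from $e_i$ and from one of $e_j,e_k$ at a time... this needs care), giving the hypothesis of conclusion (2) that both components of $L_i^0$ are unknotted. Then the crossing change between $e_j$ and $e_k$, transported to the banded picture, is a crossing change between the two components of the link $L_i^n$ for the appropriate framing $n$; a crossing change between two components shifts the linking number by $\pm 1$, and since the result must be the unlink $L_i^0$ (the banding of $0_\theta$), the starting link has linking number $\pm 1$, i.e. it is $L_i^{1}$ or $L_i^{-1}$, and is unknotted by that single crossing change, yielding $u(L_i^{1})=1$ or $u(L_i^{-1})=1$ as in conclusion (2). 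The subtle bookkeeping — checking exactly which constituent knots are forced to be trivial in each sub-case, and matching the framing of the band to the linking number of the resulting link so that "the crossing change survives the banding" is literally true rather than true up to isotopy — is where the real work lies; the rest is a case analysis on the type of the unknotting crossing.
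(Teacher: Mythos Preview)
Your overall strategy matches the paper's: the unknotting crossing disk misses some edge $e_i$, banding along that edge commutes with the crossing change, and the resulting link $L_i^n$ is taken to the unlink by a single crossing change, whence the case analysis on $n$ and on which components the crossing involves. Your case~2 is essentially this argument and is correct; the paper streamlines it by working from $\Th_0$ (choosing the band in the sphere $S$ so that the banded $\Th_0$ is visibly the unlink, then performing the surgery) rather than from $\Th$, which sidesteps your hedging about ``which framing'' yields the unlink after the crossing change.

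Your case~1, however, contains a genuine error. When the unknotting crossing is a self-crossing of an edge $e_a$, you choose to band along $e_a$ itself. But the crossing disk $D$ meets $e_a$ in two points, so any band $b$ with core $e_a$ meets $D$ in two transverse arcs, and after banding the link $L_a^n$ meets $D$ in \emph{four} points: the $\pm1$--surgery on $\bdry D$ is then no longer a single crossing change on the link, and your claim that ``the crossing arc can be pushed off $e_i$ slightly'' to land on one component does not hold. (You also write ``the other component $K_i$ is unknotted'', but the components of $L_i^n$ are isotopic to $K_j$ and $K_k$, never $K_i$.) The fix is immediate and folds case~1 into case~2: when the crossing is a self-crossing of $e_a$, the disk $D$ misses \emph{two} edges, so take $i\neq a$ and band along that $e_i$. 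The crossing change then survives as a self-crossing of the single component containing $e_a$, the other component is already unknotted, and the preserved linking number forces $n=0$, giving conclusion~(1) directly. With this correction your two cases collapse into the paper's single observation: always band along an edge missed by the crossing disk.
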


Let us emphasize here that, as a link $L$ of $m$ components may be viewed as a knotted planar graph, its unknotting number $u(L)$ is the minimum number of crossing changes needed to obtain the $m$--component unlink.

\begin{proof}
Let $\Th_0$ be the unknotted $\theta$--curve embedded in the sphere $S$.
If $u(\Th)=1$, then there is a crossing disk $D$ for $\Th_0$ such that $\pm 1$--surgery on the crossing circle $\bdry D$ produces $\Th$.
 Observe that $D$ is disjoint from an edge $e_i$ of $\Th_0$.  Banding $\Th_0$ along $e_i$ with a rectangle in $S$ produces the two-component unlink  in $S$.  Since such a rectangle may be chosen to lie in an arbitrarily small neighborhood of $e_i$, it may be taken to be disjoint from $D$ as well.  Therefore the operations of  the banding and $\pm1$--surgery on $\bdry D$ commute.   

Banding along $e_i$ after $\pm1$--surgery on $\bdry D$ thus produces one of the links $L_i^n$.  Then performing $\mp1$--surgery on the image of $\bdry D$ produces the unlink as this undoes the $\pm1$--surgery leaving only the banding from $\Th_0$. Hence  $u(L_i^n) \leq 1$.   

Note that $u(L_i^n) \geq u(K_j) + u(K_k) + |n|$. (Any crossing change either involves only a single component and preserves linking number or involves both components to alter the linking number while preserving the knot types of the components.)
If $u(L_i^n)=0$, then $L_i^n$ is the unlink and $n=0$.
If $u(L_i^n)=1$ and the trivializing crossing change involves only one component of $L_i^n$, then $n=0$ and the other component must be an unknot.
If $u(L_i^n)=1$ and the trivializing crossing change involves both components of $L_i^n$, then $n = \pm1$ and both components are unknots.
\end{proof}

For an oriented link $L$ in $S^3$, let $\det(L)$, $\eta(L)$, and $\sigma(L)$ denote its determinant, nullity, and signature (see e.g.\ \cite[Section 2]{nagelowens}).  Note that $\det$ and $\eta$ are insensitive to mirroring and changes of orientation of $L$.  However $\sigma$ may differ upon changing orientations on a proper subcollection of components of $L$ and mirroring $L$ negates $\sigma$.

If $L$ is a link of two components with $u(L)=1$ then the following conditions hold:
\begin{itemize}
\item (Det) $\det(L) = 2 c^2$ for some $c \in \Z$, e.g.\ \cite[Lemma 1]{lickorish}
\item (NullSig) $1 +\eta \geq |\sigma|$, via \cite[Lemma 2.2]{nagelowens}
\end{itemize}

For $n \in \Z$, the knot $K^n$ obtained by $n$ full twists of $K$ about a disjoint unknot $c$ is the image of $K$ upon $-1/n$--surgery on $c$.
\begin{lemma}
\label{lem:twist}
Suppose $L$ is a link of two components with $u(L)=1$ and linking number $0$. Then an unknotting
crossing involves only one component $L$ we call $K$, the other component is an unknot which we call $c$, and the twist family of knots $\{K^n\}$ obtained by twisting $K$ about $c$ all have $u(K^n)\leq 1$.
\end{lemma}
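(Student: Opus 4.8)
The plan is to prove everything with crossing disks and Dehn surgeries, exploiting the fact that a twist and a crossing change performed in disjoint regions of $S^3$ commute.

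I would first dispatch the structural claims using linking numbers. Fix a crossing change realizing $u(L)=1$, i.e.\ one taking $L$ to the two--component unlink. A crossing change at a crossing between the two components of $L$ alters their linking number by $\pm1$, while $L$ and the unlink both have linking number $0$; hence the unknotting crossing change must occur at a self--crossing of a single component, which we name $K$, and it leaves the other component $c$ untouched. It follows that $c$ is the (unknotted) second component of the resulting unlink and that the crossing change turns $K$ into an unknot split from $c$.

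Next I would fix a crossing disk and run the commuting argument, which I expect to be the crux of the write--up (though not genuinely hard). In a diagram of $L$ realizing the unknotting crossing change, the relevant crossing has both strands on $K$, so a small crossing disk $D$ about it meets $K$ in two points of opposite sign and is disjoint from $c$, since the strands of $c$ lie elsewhere in the diagram; thus $(\pm1)$--surgery on $\gamma=\bdry D$ effects the crossing change, and $\operatorname{lk}(\gamma,c)=D\cdot c=0$. Twisting $K$ about the unknot $c$ by $n$ full twists is $(-1/n)$--surgery on $c$, supported in an arbitrarily small tubular neighborhood $N(c)$ which we take disjoint from $D$; hence $D$ persists as a crossing disk for the twisted knot $K^n$, the image of $K$. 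Since the crossing change via $D$ is supported in a neighborhood $N(D)$ disjoint from $N(c)$, the crossing change and the twist commute. Performing the crossing change first carries $K\cup c$ to the unlink $U_K\sqcup c$, and then twisting about $c$ does nothing to $U_K$ (it lies in a ball disjoint from a spanning disk of $c$), leaving the unknot $U_K$; performing the twist first carries $K$ to $K^n$ with $D$ still a crossing disk, and by commutativity the crossing change via $D$ then produces exactly $U_K$. Hence a single crossing change unknots $K^n$, so $u(K^n)\le1$ for every $n\in\Z$ (with $n=0$ recovering $u(K)\le1$).

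The points needing care — none of them a real obstacle — are: that $D$ can be chosen disjoint from $c$, which is immediate from the self--crossing description of the unknotting crossing change; that the $n$--twist about $c$ is literally $(-1/n)$--surgery on $c$ and, because $\operatorname{lk}(\gamma,c)=0$ and $N(c)\cap D=\emptyset$, means the same operation before and after the crossing change and commutes with it; and that twisting a split unknot about $c$ has no effect, by isotoping it off a spanning disk of $c$. The substance of the lemma is thus the linking--number dichotomy together with this locality observation.
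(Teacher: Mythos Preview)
Your proof is correct and follows essentially the same approach as the paper: both use the linking-number argument to show the crossing disk $D$ meets only one component $K$ (so $c$ is an unknot left unchanged by the crossing change), and then observe that since $D$ is disjoint from $c$, the unknotting crossing change survives twisting along $c$. The only cosmetic difference is that the paper phrases this last step as ``$K$ becomes an unknot in the solid torus exterior of $c$, hence remains unknotted under any re-embedding of that solid torus,'' while you phrase it as commutativity of two surgeries supported in disjoint regions---these are the same observation.
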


\begin{proof}
Let $D$ be a crossing disk for $L$ such that the crossing change induced by $\pm1$--surgery on $\bdry D$ transforms $L$ into the trivial link.   Since the linking number of $L$ is $0$, $D$ must intersect a single component of $L$, say $K$.  Let $c$ be the other component.  Since this crossing change does not affect the knot type of $c$, $c$ must be an unknot.  Moreover, the crossing change must transform $K$ into an unknot in the solid torus exterior of $c$.  Hence for any embedding of this solid torus into $S^3$, $\pm1$--surgery on the image of $D$ continues to unknot the image of $K$.  In particular, the knots $K^n$ produced by the embeddings of this solid torus described by $-1/n$--surgery on $c$ all have unknotting number at most $1$.
\end{proof}

\begin{table}[tb]
\caption{For each of the four $\theta$--curves $\mb{7_{5}}$, $\mb{7_{22}}$, $\mb{7_{24}}$ and $\mb{7_{58}}$ shown with labeling as in Figure~\ref{fig:thetas}, the three knot types of the constituent knots (up to mirror images) and determinant and signature calculations for each banding link with at least one unknotted component are listed. For a banding link $L$, these calculations are presented as $(\det(L), \{|\sigma(L')|, |\sigma(L'')|\})$ where $L'$ and $L''$ are the two orientations on $L$ up to overall reversal.  In all cases $\eta=0$.  When unmarked, condition (Det) implies $u(L)>1$. When marked with $\dagger$, condition (NullSig) implies $u(L)>1$.  When marked with $\ast$, we employ Lemma~\ref{lem:twist}. }
\label{table:calcs}
\centering
\small
\begin{tabular}{@{}lcccccccc@{}}
\toprule
$\theta$--curve &  $K_1$ & $K_2$ & $K_3$ & $L_1^0$ & $L_2^0$ & $L_3^0$ & $L_3^{-1}$ & $L_3^{+1}$  \\
\midrule
$\mb{7_{5}}$ &  $0_1$ & $0_1$ & $3_1$ & $(8,\{1,1\})^{\ast}$ & $(8,\{1,1\})^{\ast}$ & $(24,\{3,3\})$ & $(30,  \{3,3\})$ & $(18,  \{3,5\})^{\dagger}$ \\
$\mb{7_{22}}$ &  $5_2$ & $0_1$ & $3_1$ & $(24,  \{3,3\})$ &   & $(18,  \{3,5\})^{\dagger}$ \\
$\mb{7_{24}}$   & $5_2$ & $4_1$ & $0_1$ & $(8,  \{1,1\})^{\ast}$ & $(40, \{1,1\})$ \\
$\mb{7_{58}}$   & $7_6$ & $4_1$ & $0_1$ & $(24,  \{3,3\})$ & $(24,  \{3, 3\})$\\
\bottomrule
\end{tabular}
\end{table}

\begin{proof}[Proof of Theorem~\ref{thm:kenmain}]
We now apply Lemma~\ref{lem:unlinking}, conditions (Det) and (NullSig), and Lemma~\ref{lem:twist} to determine the unknotting numbers of the $\theta$--curves $\mb{7_{5}}$, $\mb{7_{22}}$, $\mb{7_{24}}$, and $\mb{7_{58}}$. Table~\ref{table:calcs} shows the calculations of determinant and signature (the nullity is always 0) which are sufficient to conclude the relevant banding links have unknotting number greater than $1$ in all cases except three, the links $L_1^0$ and $L_2^0$ of $\mb{7_{5}}$ and $L_1^0$ of $\mb{7_{24}}$. To perform these calculations, we used {\tt PLink} from within {\tt SnapPy} \cite{snappy} to draw the links and obtain their PD code.  Then we used a program of Owens \cite{owens} built on the {\tt KnotTheory`} package \cite{knottheory} for {\tt Mathematica} \cite{mathematica} to compute the determinant, nullity, and signature from the PD codes of these links.

\smallskip
For the links $L_1^0$ and $L_2^0$ of $\mb{7_{5}}$,
observe that $\mb{7_{5}}$ admits an orientation preserving involution that exchanges edges $e_1$ and $e_2$ and reverses $e_3$. This becomes apparent from a slight redrawing of $\mb{7_{5}}$ in which the diagram has $8$ crossings and a vertical axis of symmetry.  Hence the links $L_1^0$ and $L_2^0$ are actually isotopic.  

Focusing then on the link $L_1^0$ of $\mb{7_{5}}$, 
 {\tt SnapPy} informs us that performing $-1$--surgery on the unknotted component $K_2$ of $L_1^0$ produces the knot $11n79$ for which $u(11n79)=2$ is reported on KnotInfo \cite{KnotInfo}. 
 (Knotorious \cite{knotorious} reports that the algebraic unknotting number of $11n79$ is $2$ due to the ``Lickorish Test'' \cite[Lemma 2]{lickorish}. By inspection, one sees that it is at most $2$.)
 Therefore  $u(L_1^0)>1$ by Lemma~\ref{lem:twist}. Hence we also have that $u(L_2^0)>1$ for the link $L_2^0$ of $\mb{7_{5}}$.
 
 \smallskip
 For the link $L_1^0$ of $\mb{7_{24}}$, 
 {\tt SnapPy} informs us that performing $-1$--surgery on the unknotted component $K_3$ of $L_1^0$ produces the alternating knot $7_4$ for which $u(7_4)=2$ is reported on KnotInfo \cite{KnotInfo}. 
 (Here too Knotorious \cite{knotorious} reports that the algebraic unknotting number of the knot $7_4$ is $2$ due to the ``Lickorish Test'' \cite[Lemma 2]{lickorish}. 
 By inspection, one sees that it is at most $2$.)   Therefore  $u(L_1^0)>1$ by Lemma~\ref{lem:twist}.
 
Taken together, Lemma~\ref{lem:unlinking} shows that the $\theta$--curves $\mb{7_{5}}$, $\mb{7_{22}}$, $\mb{7_{24}}$, and $\mb{7_{58}}$ all must have unlinking number 2.
 \end{proof}

\begin{figure}
\centering
\includegraphics[height=1.25in]{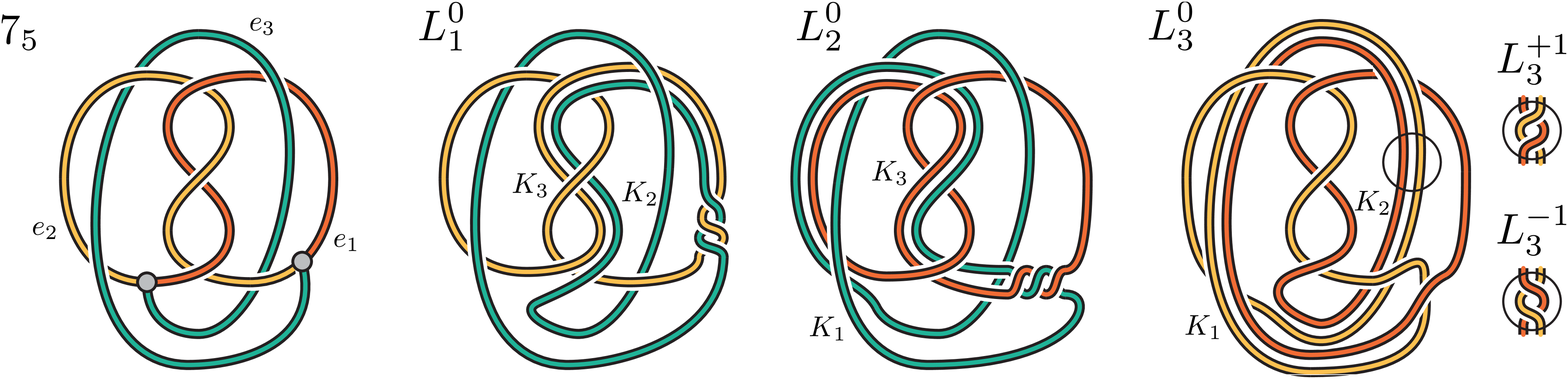}\\
\includegraphics[height=1.25in]{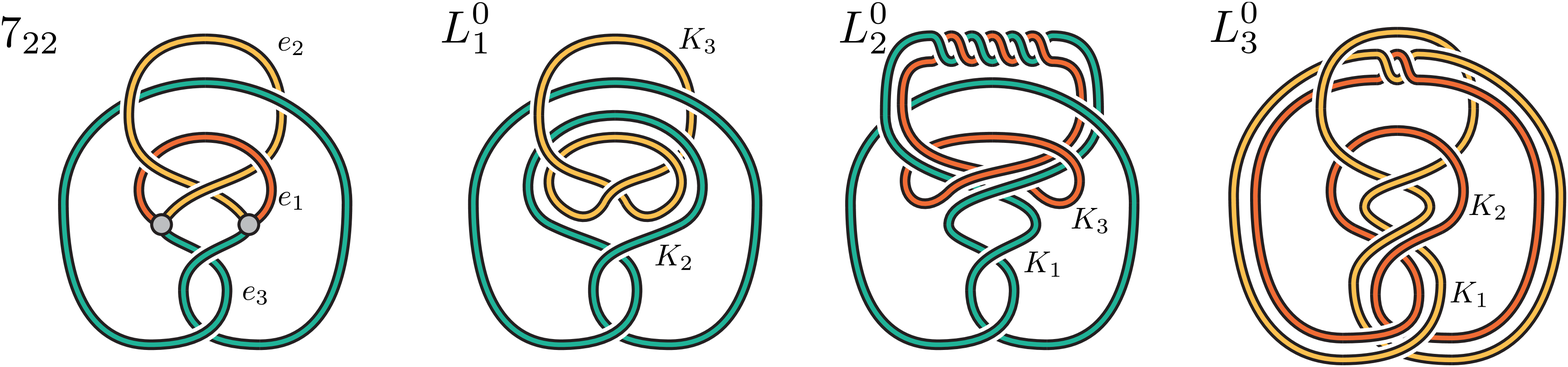}\\
\includegraphics[height=1.25in]{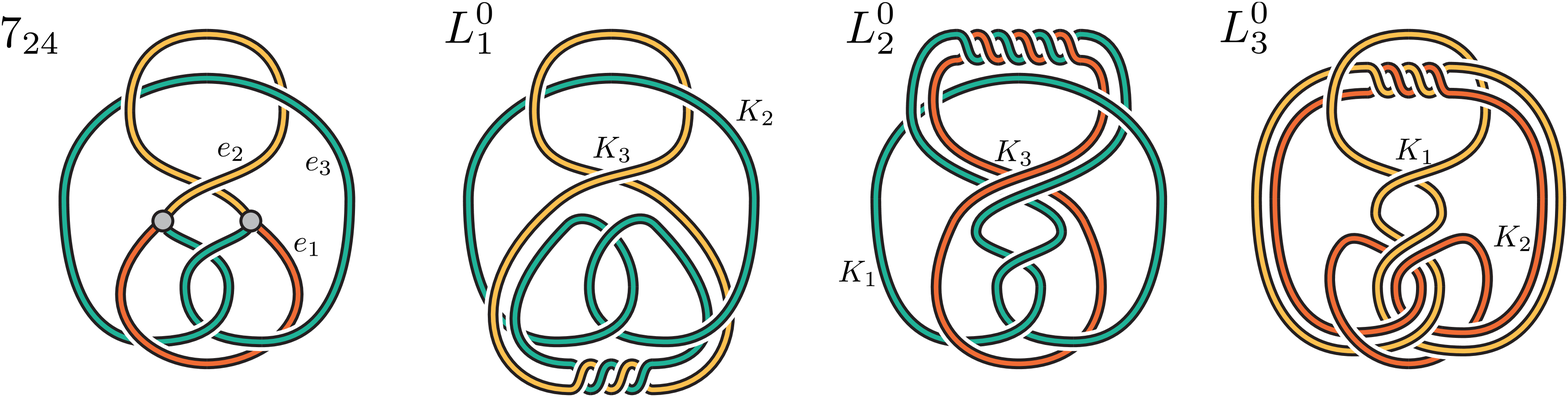}\\
\includegraphics[height=1.25in]{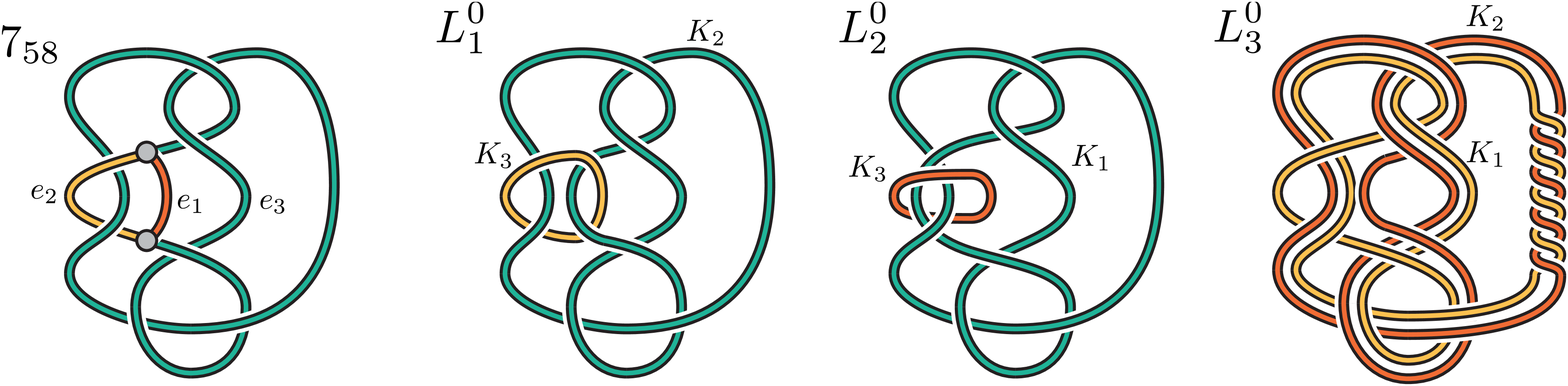}
\caption{The four $\theta$--curves $\mb{7_{5}}$, $\mb{7_{22}}$, $\mb{7_{24}}$, and $\mb{7_{58}}$ are each shown with a labeling of their edges and their bandings $L_1^0$, $L_2^0$, and $L_3^0$. For $\mb{7_{5}}$ the modifications of $L_3^0$ to produce $L_3^{+1}$ and $L_3^{-1}$ are also shown.}
\label{fig:thetas}
\end{figure}


\bibliographystyle{amsinitial}
\bibliography{ReplicationIntermediates-withappendix}

\end{document}